\newcommand{\overbar}[1]{\mkern 1.5mu\overline{\mkern-1.5mu#1\mkern-1.5mu}\mkern 1.5mu}
\newcommand{\cupdot}{\mathbin{\mathaccent\cdot\cup}}
\newcommand{\kchromfamily}[1]{\mathcal{G}_{#1}(n)}
\newcommand{\kchromfamilystar}{\mathcal{G}^{\ast}_{k}(n)}
\theoremstyle{definition}
\newtheorem{theorem}{Theorem}[section]
\newtheorem{lemma}[theorem]{Lemma}
\newtheorem{corollary}[theorem]{Corollary}
\newtheorem{conjecture}[theorem]{Conjecture}
 \title{New Bounds for Chromatic Polynomials \\ and Chromatic Roots}
    \author{Jason Brown \and Aysel Erey\footnote{Corresponding author, e-mail: aysel.erey@gmail.com}}
    \date{Department of Mathematics and Statistics\\ Dalhousie University \\ Halifax, Nova Scotia, Canada B3H 3J5 \\[\baselineskip] }
\begin{document}

\maketitle

\begin{abstract}
If $G$ is a $k$-chromatic graph of order $n$ then it is known that the chromatic polynomial of $G$, $\pi(G,x)$, is at most $x(x-1)\cdots (x-(k-1))x^{n-k} = (x)_{\downarrow k}x^{n-k}$ for every $x\in \mathbb{N}$. We improve here this bound by showing that  
\[ \pi(G,x) \leq (x)_{\downarrow k} (x-1)^{\Delta(G)-k+1}  x^{n-1-\Delta(G)}\] 
for every $x\in \mathbb{N},$
where $\Delta(G)$ is the maximum degree of $G$.
Secondly, we show that if $G$ is a connected $k$-chromatic graph of order $n$ where $k\geq 4$ then $\pi(G,x)$ is at most $(x)_{\downarrow k}(x-1)^{n-k}$ for every real $x\geq n-2+\left( {n \choose 2} -{k \choose 2}-n+k \right)^2$ (it had been previously conjectured that this inequality holds for all $x \geq k$). Finally, we provide an upper bound on the moduli of the chromatic roots that is an improvment over known bounds for dense graphs.
\end{abstract}

\thanks{\textit{Keywords}:
$k$-colouring, chromatic number, chromatic polynomial, chromatic root}

\section{Introduction}

Let $G$ be a graph with vertex set $V(G)$ and edge set $E(G)$ (the {\em order} and {\em size} of the graph are, respectively, $|V(G)|$ and $|E(G)$). For a nonnegative integer $x$, an \textit{$x$-colouring} of $G$ is a function $f:V(G)\rightarrow \{1,\dots , x\}$ such that $f(u)\neq f(v)$ for every $uv\in E(G)$. The \textit{chromatic number} $\chi(G)$ is smallest $x$ for which $G$ has an $x$-colouring. We say that $G$ is \textit{k-chromatic} if $\chi(G)=k$. The well known \textit{chromatic polynomial} $\pi(G,x)$ is the polynomial whose values at nonnegative integral values of $x$ counts the number of $x$-colourings of $G$. The fact that $\pi(G,x)$ is a polynomial in $x$ follows from the well-known {\em edge addition - contraction formula}: $$\pi(G,x)=\pi(G+uv,x)+\pi(G\cdot uv,x)$$ if $u$ and $v$ are nonadjacent vertices of $G$. An \textit{$i$--colour partition} of $G$ is a partition of the vertices of $G$ into $i$ nonempty independent sets. Let $a_i(G)$ denote the number of $i$-colour partitions of $G$. It is easy to see that 
$$\pi(G,x)=\sum_{i=\chi(G)}^{n}a_i(G)\,(x)_{\downarrow i}$$
where $(x)_{\downarrow i}=x(x-1)\dots (x-i+1)$ is the $i$th \textit{falling factorial} of $x$ and $n$ is the order of $G$. Moreover, $a_{i}(G)$ also satisfies an edge addition - contraction formula, namely, $a_i(G)=a_i(G+uv)+a_i(G\cdot uv)$. We refer the reader to \cite{dongbook} for a general discussion of graph colourings and chromatic polynomials.

Let $\mathcal{G}_k(n)$ be the family of all $k$-chromatic graphs of order $n$. Given a natural number $x\geq k$, it is natural to inquire about the maximum number of $x$-colourings among $k$-chromatic graphs of order $n$, that is, among graphs in $\mathcal{G}_k(n)$. Tomescu~\cite{tomescuintrobook} studied this problem and showed the following:

\begin{theorem}\cite[pg.\,239]{tomescuintrobook}\label{tomesdisconn}
Let $G$ be a graph in $\mathcal{G}_k(n)$. Then for every $x\in \mathbb{N}$,
$$\pi(G,x)\leq (x)_{\downarrow k} \, x^{n-k}.$$
Moreover, when $x\geq k$, the equality is achieved if and only if $G\cong K_k\cupdot (n-k)K_1$ (the graph consisting of a $k$-clique plus $n-k$ isolated vertices).
\end{theorem}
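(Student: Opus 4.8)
The plan is to prove the inequality by induction on the order $n$, for all $x\in\mathbb{N}$ at once; since $\pi(G,x)=0=(x)_{\downarrow k}x^{n-k}$ whenever $x<k$, I may and will assume $x\ge k$. It is cleaner to carry along the slightly wider hypothesis $\chi(G)\ge k$ (this class is stable under the deletions used below, and the bound for $\chi(G)\ge k$ contains the one wanted). The base case $n=k$ is immediate: $\chi(G)\ge k$ on $k$ vertices forces $G\cong K_k$, and $\pi(K_k,x)=(x)_{\downarrow k}=(x)_{\downarrow k}x^{0}$.

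For the inductive step ($n>k$) I would first clear two easy reductions. If $G$ is disconnected, split off a set $G_1$ of components with $\chi(G_1)=\chi(G)\ge k$, leaving $G_2\ne\emptyset$; then $\pi(G,x)=\pi(G_1,x)\,\pi(G_2,x)$, and by the induction hypothesis $\pi(G_1,x)\le(x)_{\downarrow k}x^{|V(G_1)|-k}$ while trivially $\pi(G_2,x)\le x^{|V(G_2)|}$, and these multiply to $(x)_{\downarrow k}x^{n-k}$. If $G$ has an isolated vertex $v$, then $\pi(G,x)=x\,\pi(G-v,x)\le x\,(x)_{\downarrow k}x^{n-1-k}=(x)_{\downarrow k}x^{n-k}$ by the induction hypothesis on $G-v$. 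So I may assume $G$ is connected with no isolated vertex.

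Next, the main device is the deletion identity $\pi(G,x)=\sum_{f}\bigl(x-|f(N_G(v))|\bigr)$, the sum over all proper $x$-colourings $f$ of $G-v$ (given $f$, the colour of $v$ can be any of the $x-|f(N_G(v))|$ colours missing from $N_G(v)$). If $G$ is \emph{not} $k$-vertex-critical — equivalently, some vertex $v$ has $\chi(G-v)\ge k$ — I pick such a $v$, and even using only $|f(N_G(v))|\ge0$ I get
\[
\pi(G,x)\le x\,\pi(G-v,x)\le x\,(x)_{\downarrow k}x^{n-1-k}=(x)_{\downarrow k}x^{n-k}
\]
from the induction hypothesis on $G-v$. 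Because $N_G(v)\ne\emptyset$ in fact forces $x-|f(N_G(v))|\le x-1$, this inequality is strict; this is consistent with the extremal graph $K_k\cupdot(n-k)K_1$ being disconnected once $n>k$.

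The remaining case — $\chi(G)=k$ with $G$ $k$-vertex-critical and $n>k$ — is the one I expect to be the main obstacle (e.g.\ $G=C_5$, $k=3$): now every deletion drops the chromatic number to $k-1$, and plugging the induction hypothesis for $(k-1)$-chromatic graphs into the deletion identity as above is too lossy. My plan here is to refine the count: split the colourings $f$ of $G-v$ by how many colours they use. Any $f$ using exactly $k-1$ colours has $|f(N_G(v))|=k-1$ — otherwise it would extend to a proper $(k-1)$-colouring of $G$, contradicting $\chi(G)=k$ — so these contribute the full factor $x-(k-1)$, whereas the colourings using $\ge k$ colours contribute at most $x-1$; this gives
\[
\pi(G,x)\le(x-1)\,\pi(G-v,x)-(k-2)\,(x)_{\downarrow(k-1)}\,a_{k-1}(G-v).
\]
To finish, one still needs an estimate of $\pi(G-v,x)$ sharper than the generic inductive one, which I would extract from the fact that a $k$-vertex-critical graph with $n>k$ has many edges (Gallai's bound, or the Kostochka–Yancey bound), perhaps after strengthening the induction hypothesis to record the edge count. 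Finally, for the equality statement: the connected case above is always strict when $n>k$, so equality forces $G$ disconnected, and unwinding the reductions then pins down $G\cong K_k\cupdot(n-k)K_1$ as the only possibility for $x\ge k$.
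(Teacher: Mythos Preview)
The paper does not prove this theorem at all: Theorem~\ref{tomesdisconn} is quoted from Tomescu's book and used as a black box (e.g.\ in the proof of Lemma~\ref{universallemma}). So there is no ``paper's own proof'' to compare your attempt against.

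That said, your proposal has a genuine gap, and it is not a technicality but the heart of the matter. Observe that any $k$-chromatic $G$ contains a $k$-critical subgraph $H'$; passing to the spanning subgraph $H'\cupdot(n-|V(H')|)K_1$ gives $\pi(G,x)\le\pi(H',x)\,x^{n-|V(H')|}$, so the entire theorem is equivalent to the bound $\pi(H',x)\le(x)_{\downarrow k}\,x^{|V(H')|-k}$ for $k$-critical $H'$. Your disconnected and non-critical reductions are correct, but they only serve to isolate this core case, which you then leave open.

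Your refined inequality in that case,
\[
\pi(G,x)\le(x-1)\,\pi(G-v,x)-(k-2)\,(x)_{\downarrow k-1}\,a_{k-1}(G-v),
\]
is valid, but combining it with the inductive bound $\pi(G-v,x)\le(x)_{\downarrow k-1}x^{n-k}$ would force $a_{k-1}(G-v)\ge x^{n-k}$, which is absurd for large $x$. The patch you gesture at --- strengthening the induction to track the edge count via Gallai or Kostochka--Yancey --- is not obviously sufficient: the correction term $(k-2)(x)_{\downarrow k-1}a_{k-1}(G-v)$ has degree only $k-1$ in $x$, whereas the shortfall you must cover is of degree $n-1$, so you would need a strengthened hypothesis saving a term of degree $n-2$ in $\pi(G-v,x)$ and then verify that this strengthening survives every branch of the induction (including the non-critical and disconnected cases). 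That is a substantial piece of work you have not carried out, and it is not clear it closes. As it stands, the proposal is a reduction to the hard case together with an unexecuted plan, not a proof.
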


The next natural problem is to maximize the number of $x$-colourings of a graph over the family of {\em connected} $k$-chromatic graphs of order $n$ (we denote this family by $\mathcal{C}_k(n)$).
Interestingly, the problem becomes much more complicated when the connectedness condition is imposed. The answer is trivial when $x=k = 2$, as any $2$-chromatic connected graph has precisely two $2$-colourings. It is well known that (see, for example, \cite{dongbook}) if $G$ is a connected graph of order $n$ then $\pi(G,x)\leq x(x-1)^{n-1}$ for every $x\in \mathbb{N}$ and furthermore, when $x\geq 3$ the equality is achieved if and only if $G$ is a tree. Therefore, for $k=2$ and $x\geq 3$, the maximum number of $x$-colourings of a graph in $\mathcal{C}_2(n)$ is equal to $x(x-1)^{n-1}$ and extremal graphs are trees.
 
Tomescu settled the problem for $x=k=3$ in \cite{tomescufrench2} and later extended it for $x\geq k=3$ in \cite{tomescu}  by showing that 
 if $G$ is a graph in $\kchromfamily{3}$ then
$$\pi(G,x)\leq (x-1)^{n}-(x-1) \ \ \ \text{for odd} \ n$$
and
$$\pi(G,x)\leq (x-1)^{n}-(x-1)^2 \ \ \ \text{for even} \ n$$
for every integer $x\geq 3$ and furthermore the extremal graph is the odd cycle $C_{n}$ when $n$ is odd and odd cycle with a vertex of degree $1$ attached to the cycle (denoted $C_{n-1}^1$) when $n$ is even.

One might subsequently think that maximizing the number of $x$-colourings of a graph in $\mathcal{C}_k(n)$ should depend on the value of $k$. Let $\mathcal{C}^*_k(n)$  be the set of all graphs in $\mathcal{C}_k(n)$ which have size $ {k\choose 2}+n-k$ and clique number $k$ (that is,   $\mathcal{C}^*_k(n)$  consists of graphs which are obtained from a $k$-clique by recursively attaching leaves). In \cite{tomescufrench} Tomescu considered the problem for $x=k\geq 4$ and  conjectured the following (see also \cite{tomescuextrbanach,tomescu}):

\begin{conjecture}\cite{tomescufrench} \label{conjecture}Let $G$ be a graph in  $\mathcal{C}_k(n)$  where $k\geq 4$. Then 
\[ \pi(G,k)\leq k!\, (k-1)^{n-k},\]
or, equivalently, $a_k(G)\leq (k-1)^{n-k},$
with the extremal graphs belong to  $\mathcal{C}^*_k(n)$ .
\end{conjecture}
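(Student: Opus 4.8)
The plan is to reduce Conjecture~\ref{conjecture} to the colour-partition statement, run an induction on the order $n$, and split on the clique number $\omega(G)$. The equivalence is immediate: since $(k)_{\downarrow i}=0$ whenever $i>k$, evaluating $\pi(G,x)=\sum_{i\ge\chi(G)}a_i(G)(x)_{\downarrow i}$ at $x=k$ gives $\pi(G,k)=a_k(G)\,k!$; so it suffices to show $a_k(G)\le (k-1)^{n-k}$ for $G\in\mathcal{C}_k(n)$, $k\ge 4$, with equality forcing $G\in\mathcal{C}^*_k(n)$. In fact I would attempt the stronger polynomial inequality $\pi(G,x)\le (x)_{\downarrow k}(x-1)^{n-k}$: the right-hand side is precisely $\pi(H,x)$ for every $H\in\mathcal{C}^*_k(n)$ (each pendant vertex contributes a factor $x-1$), so we are comparing $\pi(G,x)$ with $\pi(H,x)$. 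The base case $n=k$ is trivial since $K_k$ is the only graph in $\mathcal{C}_k(k)$.

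The case $\omega(G)=k$ is handled cleanly. If $G$ contains a $K_k$ on a set $S$, then, using connectedness, order the remaining $n-k$ vertices so each has an earlier neighbour and select one such edge per vertex; adjoining these $n-k$ edges to the clique on $S$ yields a \emph{spanning} subgraph $H'\subseteq G$ that is a $k$-clique with leaves recursively attached, so $H'\in\mathcal{C}^*_k(n)$ and $\pi(H',x)=(x)_{\downarrow k}(x-1)^{n-k}$. Deleting an edge changes a chromatic polynomial by the chromatic polynomial of a graph on $n-1$ vertices, which is nonnegative for $x\ge n-2$; iterating, $\pi(G,x)\le\pi(H',x)=(x)_{\downarrow k}(x-1)^{n-k}$ for all real $x\ge n-2$, with equality forcing $G\in\mathcal{C}^*_k(n)$. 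A leaf $v$ is handled similarly via $\pi(G,x)=(x-1)\pi(G-v,x)$ and $G-v\in\mathcal{C}_k(n-1)$, feeding the induction directly.

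The genuinely \emph{hard} case --- and, I expect, the reason the conjecture is still open --- is when $G$ is $K_k$-free, i.e.\ $\omega(G)\le k-1$: then the conjectured extremal graphs (which contain $K_k$) are not even candidates, so one expects strict inequality, and the clean subgraph argument is unavailable. Here I would settle for the explicit weak statement. Expand $\pi(H,x)-\pi(G,x)=\sum_{i=k}^{n}(a_i(H)-a_i(G))(x)_{\downarrow i}$: the $i=n$ terms cancel ($a_n\equiv 1$), and since $a_{n-1}(\cdot)$ counts non-edges the $i=n-1$ coefficient equals $|E(G)|-|E(H)|=m-m_0$ with $m=|E(G)|$ and $m_0=\binom{k}{2}+n-k=|E(H)|$. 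I would invoke the classical bound that a connected $k$-chromatic graph of order $n$ has at least $\binom{k}{2}+n-k$ edges, with equality iff it lies in $\mathcal{C}^*_k(n)$ when $k\ge 4$ (combine $|E(F)|\ge\binom{k}{2}+(t-k)$ for a $k$-critical subgraph $F$ on $t$ vertices, from $\delta(F)\ge k-1$, with the $n-t$ further edges forced by connectedness; for equality, Brooks' theorem forces $F=K_k$). So either $G\in\mathcal{C}^*_k(n)$ and we are done, or $m-m_0\ge 1$. Since $(x)_{\downarrow i}>0$ and $(x)_{\downarrow i}\le(x)_{\downarrow n-2}$ for $k\le i\le n-2$ when $x\ge n-2$, it then suffices to bound the accumulated lower-order error $E_G:=\sum_{i=k}^{n-2}(a_i(G)-a_i(H))^{+}$ by $\big(\binom{n}{2}-m_0\big)^{2}=\big(\binom{n}{2}-\binom{k}{2}-n+k\big)^{2}$ (the squared number of non-edges of $H$): indeed this makes $(m-m_0)(x-n+2)\ge E_G$ hold as soon as $x\ge n-2+\big(\binom{n}{2}-\binom{k}{2}-n+k\big)^{2}$, which is exactly the stated threshold, and then $\pi(H,x)-\pi(G,x)\ge(m-m_0)(x)_{\downarrow n-1}-E_G\,(x)_{\downarrow n-2}\ge 0$.

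The main obstacle is this last estimate: controlling $E_G$ by a degree-four polynomial in $n$, even though individual $a_i(G)$ with $i$ near $k$ can be exponentially large in $n$ (Theorem~\ref{tomesdisconn} only gives $\pi(G,x)\le(x)_{\downarrow k}x^{n-k}$, which one checks is too lossy to absorb here). This forces one to use the cancellation against $a_i(H)$ index by index, i.e.\ to prove quantitative forms of $a_i(G)\lesssim a_i(H)$ for the lower indices --- delicate precisely because at $i=k$ this is Conjecture~\ref{conjecture} itself. A secondary technical point is pinning down the equality case of the edge-count lemma for $k\ge4$. For the conjecture in full (all $x\ge k$, including small $x$), the $K_k$-free case seems beyond these methods: even the bare inequality $a_k(G)\le(k-1)^{n-k}$ for $K_k$-free $k$-chromatic graphs appears to need a new idea, so here I would be content with the explicit large-$x$ bound.
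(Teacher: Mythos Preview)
The statement is an open conjecture; the paper does not prove it, and neither do you --- you correctly diagnose that the $K_k$-free case is the obstruction and retreat to a large-$x$ partial result. What you end up aiming for is precisely the paper's Theorem~\ref{thmforxlargeenough}, with the identical threshold $x\ge n-2+\bigl(\binom{n}{2}-\binom{k}{2}-n+k\bigr)^{2}$.

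Where you and the paper diverge is in how that threshold is reached. Your route is an elementary coefficient comparison: bound each $(x)_{\downarrow i}$ for $i\le n-2$ by $(x)_{\downarrow n-2}$ and hope that the accumulated error $E_G=\sum_{i=k}^{n-2}(a_i(G)-a_i(H))^{+}$ is at most $\bigl(\binom{n}{2}-m_0\bigr)^{2}$. You flag this as the main obstacle, and rightly so: as you note, the low-index $a_i$ can be exponentially large in $n$, so a crude sum of upper bounds on $a_i(G)$ is far too big, and there is no obvious cancellation against $a_i(H)$ short of proving the conjecture itself. This step does not close as written.

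The paper sidesteps exactly this difficulty. First, Lemma~\ref{boundpartitions} (proved by a short addition--contraction induction on the number of non-edges) gives $a_{n-r}(G)\le \frac{1}{r!}\bigl(\binom{n}{2}-m\bigr)^{r}$. Second, rather than summing these bounds, the paper writes $f(x)=\frac{1}{(x)_{\downarrow k}}\bigl(\pi(G^*,x)-\pi(G,x)\bigr)$ in Newton bases with nodes $k,k+1,\ldots,n-2$ and invokes a Cauchy-type root-location theorem (Theorem~\ref{rahman}): all roots of $f$ lie in discs of radius $\rho$ about these nodes, where $\rho$ is the Cauchy bound of the associated power-series polynomial. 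The crucial gain is that the Cauchy bound uses $\bigl|\alpha_{n-r}/\alpha_{n-1}\bigr|^{1/(r-1)}$ --- the $(r-1)$-th root kills the exponential growth of $a_{n-r}$, and the resulting sequence $\bigl(\binom{n}{2}-m_0\bigr)^{r/(r-1)}/(r!)^{1/(r-1)}$ is \emph{decreasing} in $r$, so its maximum is at $r=2$, giving $\rho\le\bigl(\binom{n}{2}-m_0\bigr)^{2}$. Since the leading coefficient $\alpha_{n-1}=m-m_0>0$, positivity of $f$ beyond the largest real root yields the result. In short: the missing idea in your sketch is to locate the \emph{roots} of the difference polynomial (via Newton bases and the Cauchy bound) rather than bound its \emph{coefficients}.

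Two minor notes. For $\omega(G)=k$ the spanning-subgraph argument gives $\pi(G,x)\le\pi(H',x)$ for every natural number $x$ directly (every colouring of $G$ is a colouring of $H'$); your restriction to $x\ge n-2$ is only needed if you insist on real $x$. And for the edge-count lemma with equality case for $k\ge 4$, the paper simply cites it.
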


The authors in \cite{dongbook} mention the following conjecture which broadly extends Conjecture~\ref{conjecture} to all nonnegative integers $x$:

\begin{conjecture}\cite[pg. 315]{dongbook} \label{tomesdongconj} Let $G$ be a graph in  $\mathcal{C}_k(n)$  where $k\geq 4$. Then for every $x\in \mathbb{N}$,
$$\pi(G,x)\leq (x)_{\downarrow k}(x-1)^{n-k}.$$
Moreover, for $x\geq k$, the equality holds if and only if $G$ belongs to  $\mathcal{C}^*_k(n)$.
\end{conjecture}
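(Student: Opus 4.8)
The plan is to prove the bound by induction on the order $n$, with the reduction driven by a single vertex-deletion inequality that is valid for \emph{all} natural $x$. The base case is $n=k$: the only connected $k$-chromatic graph of order $k$ is $K_k$, so $\pi(G,x)=(x)_{\downarrow k}=(x)_{\downarrow k}(x-1)^{0}$ and the bound holds with equality. Since $K_k\in\mathcal{C}^*_k(k)$, this is consistent with the claimed equality characterization, and the induction will propagate this characterization upward.

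First I would establish the key lemma: if $G$ is connected of order at least two and $v$ is any vertex, then $\pi(G,x)\le (x-1)\,\pi(G-v,x)$ for every $x\in\mathbb{N}$. Writing $\pi(G,x)=\sum_{c}\bigl(x-|c(N(v))|\bigr)$, where the sum ranges over the proper $x$-colourings $c$ of $G-v$ and $|c(N(v))|$ counts the distinct colours used on the neighbours of $v$, connectivity forces $|c(N(v))|\ge 1$, so each summand is at most $x-1$ and summing gives the claim. The crucial feature is that this is an \emph{integer counting} inequality, so it holds for every natural $x$ and not merely for large $x$; moreover in the induction it is combined only with the nonnegative factor $x-1$, so no cancellation difficulties arise at small $x$.

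For the inductive step, suppose $G\in\mathcal{C}_k(n)$ with $n>k$ and that the bound holds in all smaller orders. If I can find a vertex $v$ that is \emph{not} a cut-vertex and satisfies $\chi(G-v)=k$, then $G-v\in\mathcal{C}_k(n-1)$, and the lemma together with the inductive hypothesis yields $\pi(G,x)\le (x-1)\,(x)_{\downarrow k}(x-1)^{n-1-k}=(x)_{\downarrow k}(x-1)^{n-k}$. The equality analysis is then clean: equality in the lemma for $x\ge k$ forces $|c(N(v))|=1$ for every colouring $c$, and, taking $x$ large, a vertex with two neighbours always admits a colouring separating them, so $\deg(v)=1$. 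Hence the extremal reduction is always removal of a leaf; by induction $G-v\in\mathcal{C}^*_k(n-1)$, and attaching a leaf gives $G\in\mathcal{C}^*_k(n)$. Conversely, a direct computation confirms that every graph in $\mathcal{C}^*_k(n)$ attains the bound, since each leaf attachment multiplies $\pi$ by $(x-1)$.

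The hard part will be guaranteeing the existence of such a vertex $v$. Keeping $G-v$ connected only requires $v$ to be a non-cut-vertex, and every connected graph of order at least two has at least two of these; the genuine obstruction is the colour-preservation requirement $\chi(G-v)=k$. This can fail precisely when every non-cut-vertex is \emph{critical} (its removal lowers the chromatic number), which pushes $G$ toward being $k$-vertex-critical --- a class in which the crude factor $x-1$ is far too lossy, since at a critical vertex $v$ one in fact has $|c(N(v))|\ge k-1$ in the colourings that dominate the count. My intended route around this is to treat the vertex-critical case separately, replacing the estimate $x-1$ by the sharper identity $\pi(G,x)=\sum_c\bigl(x-|c(N(v))|\bigr)$ and exploiting that $k$-vertex-critical graphs have minimum degree at least $k-1$. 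The real difficulty, and the reason the statement is a conjecture, is that controlling this refined sum \emph{uniformly in $x$} --- in particular at $x=k$, where the inequality reduces to the still-open bound $a_k(G)\le (k-1)^{n-k}$ of Conjecture~\ref{conjecture} --- is exactly the content that resists the elementary deletion argument.
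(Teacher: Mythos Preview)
The statement is a \emph{conjecture}, and the paper does not prove it; it is quoted from \cite{dongbook} as an open problem. What the paper actually establishes is the much weaker Theorem~\ref{thmforxlargeenough} (the inequality for real $x$ beyond an explicit quadratic threshold in $n$), together with the block reduction of Lemma~\ref{transitionlemma}. So there is no ``paper's own proof'' to compare against.

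Your proposal is likewise not a proof, and you say so yourself: the inductive deletion $\pi(G,x)\le (x-1)\,\pi(G-v,x)$ goes through cleanly whenever some non-cut vertex $v$ has $\chi(G-v)=k$, and this reduces the problem to $2$-connected $k$-vertex-critical graphs (your block-free analysis in fact recovers exactly what the paper's Lemma~\ref{transitionlemma} gives via the Complete Cutset Theorem). But at that point the argument stops. The $k$-critical case is not a technicality to be patched later; it is the entire content of the conjecture, since for $x=k$ your inequality collapses to $a_k(G)\le (k-1)^{n-k}$, which is precisely Conjecture~\ref{conjecture}. So the ``hard part'' you defer is the whole problem.

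Two smaller issues. First, your equality analysis appeals to ``taking $x$ large'' to force $\deg(v)=1$, but the equality clause in the conjecture is pointwise: one must show, for a \emph{fixed} integer $x\ge k$, that equality forces $G\in\mathcal{C}^*_k(n)$. You can rescue this once you already know $G-v\in\mathcal{C}^*_k(n-1)$: any two non-adjacent vertices of such a graph can be separated by a $k$-colouring, so $|c(N(v))|=1$ for all $c$ does force $\deg(v)=1$. Alternatively, as the paper notes in the introduction, the equality characterization follows automatically from the inequality via the multiplicity of the root $1$. Second, your final paragraph's plan to exploit $|c(N(v))|\ge k-1$ at a critical vertex is suggestive but not an argument: the bound $\pi(G,x)\le (x-k+1)\,\pi(G-v,x)$ pairs with the wrong inductive hypothesis, since $G-v$ is then $(k-1)$-chromatic and the conjecture gives no useful control on $\pi(G-v,x)$ in that case.

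In short: your reduction is correct and matches what the paper does in its concluding remarks, but neither you nor the paper closes the gap, and the paper's actual partial result (Theorem~\ref{thmforxlargeenough}) proceeds by an entirely different, analytic route through Newton bases and Cauchy bounds rather than by combinatorial induction.
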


%
%

\vspace{0.25in}
It is not hard to see that Conjecture~\ref{tomesdongconj} implies Theorem~\ref{tomesdisconn} because the chromatic polynomial of a graph is equal to the product of chromatic polynomials of its connected components. However, the problem of maximizing the number of colourings appears more difficult when graphs are connected, since the answer to this problem depends on the value of $k$ (the structure of extremal graphs seem to be different for $k=2$ and $3$).
As Tomescu  points out  \cite{tomescuintrobook}, the difficulty may lie in the lack of a characterization of {\em $k$-critical} graphs (those minimal with respect to $k$-chromaticity) when $k\geq 4$. 

If $G \in \mathcal{C}^*_k(n) $ then $\pi(G,x)=(x)_{\downarrow k}\,(x-1)^{n-k}$ as one can first colour the clique of order $k$ and then recursively colour the remaining vertices (which have only one coloured neighbour). On the other hand, one can see that if $\pi(G,x)=(x)_{\downarrow k}\,(x-1)^{n-k}$ then $G\in  \mathcal{C}^*_k(n)$ because the multiplicity of the root $1$ of the chromatic polynomial of a graph $G$ is equal to the number of blocks of $G$ \cite[pg.\,35]{dongbook}. Therefore, in Conjecture~\ref{tomesdongconj}, the extremal graphs are automatically determined if one can show that $\pi(G,x)\leq (x)_{\downarrow k}\,(x-1)^{n-k}$.

In this article, we first improve Tomescu's general upper bound (Theorem~\ref{tomesdisconn}), and show that if $G \in \mathcal{G}_k(n)$, then 
\[ \pi(G,x)\leq (x)_{\downarrow k} (x-1)^{\Delta(G)-k+1}  x^{n-1-\Delta(G)}\]
for every $x\in \mathbb{N}$ (Theorem~\ref{maintheorem}). Secondly, we discuss Conjecture~\ref{tomesdongconj} and show that if $G\in \mathcal{C}_k(n)$ where $k\geq 4$ then $\pi(G,x)$ is at most $(x)_{\downarrow k}(x-1)^{n-k}$ for every real $x\geq n-2+\left( {n \choose 2} -{k \choose 2}-n+k \right)^2$ (Theorem~\ref{thmforxlargeenough}). Finally, we also give a new upper bound on the moduli of the chromatic roots of a graph (Theorem~\ref{chromaticboundaysel}); our bound improves previously known bounds for dense graphs.

\section{Main Results}

\subsection{An improved upper bound for  the number of $x$-colourings}

Our goal is to improve Theorem~\ref{tomesdisconn} by finding an upper bound that is dependent on the maximum degree in the graph. We start by considering the case where there is a {\em universal vertex}, that is one with degree $n-1$.

\begin{lemma}\label{universallemma}
Let $G$ be a graph in $\kchromfamily{k}$ having $\Delta(G)=n-1$. Then for every $x\in \mathbb{N}$,
$$\pi(G,x)\, \leq \, (x)_{\downarrow k} \, (x-1)^{n-k}.$$ Moreover for $x\geq k$, the equality holds if and only if $G\in \mathcal{C}^*_k(n)$.
\end{lemma}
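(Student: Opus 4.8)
The plan is to use the universal vertex to collapse the statement onto Tomescu's bound (Theorem~\ref{tomesdisconn}) with one fewer colour and one fewer vertex. Let $v\in V(G)$ satisfy $\deg(v)=n-1$, so that $v$ is adjacent to every other vertex of $G$. First I would record the reduction
\[\pi(G,x)=x\cdot\pi(G-v,\,x-1)\qquad\text{for every }x\in\mathbb{N},\]
which holds because in any $x$-colouring of $G$ the colour assigned to $v$ (there are $x$ choices) is forbidden at every other vertex, so extending the colouring amounts exactly to an $(x-1)$-colouring of $G-v$. Next I would check that $\chi(G-v)=k-1$: restricting an optimal colouring of $G$ to $G-v$ uses no vertex with the colour of $v$, giving $\chi(G-v)\le k-1$, and giving $v$ a fresh colour on top of an optimal colouring of $G-v$ gives $k=\chi(G)\le\chi(G-v)+1$. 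Hence $G-v$ is a $(k-1)$-chromatic graph of order $n-1$, i.e. $G-v\in\mathcal{G}_{k-1}(n-1)$. (The case $n=1$ is trivial, and $n\ge 2$ forces $k\ge 2$, so $k-1\ge 1$ and everything below makes sense.)

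Second, I would apply Theorem~\ref{tomesdisconn} to $G-v$ evaluated at the natural number $x-1$ (one may assume $x\ge 1$, as both sides of the claimed inequality vanish at $x=0$):
\[\pi(G-v,\,x-1)\ \le\ (x-1)_{\downarrow(k-1)}\,(x-1)^{(n-1)-(k-1)}\ =\ (x-1)_{\downarrow(k-1)}\,(x-1)^{n-k}.\]
Multiplying through by $x$ and using the identity $x\,(x-1)_{\downarrow(k-1)}=(x)_{\downarrow k}$ yields $\pi(G,x)\le(x)_{\downarrow k}(x-1)^{n-k}$, which is the desired bound.

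Third, for the equality characterization I would fix $x\ge k$, so that $x-1\ge k-1$ and the sharp form of Theorem~\ref{tomesdisconn} is available for $G-v$. Since $\pi(G,x)=x\cdot\pi(G-v,x-1)$ with $x\ge 1$, equality $\pi(G,x)=(x)_{\downarrow k}(x-1)^{n-k}$ holds if and only if $\pi(G-v,x-1)=(x-1)_{\downarrow(k-1)}(x-1)^{n-k}$, which by the extremal part of Theorem~\ref{tomesdisconn} happens precisely when $G-v\cong K_{k-1}\cupdot(n-k)K_1$. Re-inserting $v$, the $(k-1)$-clique together with $v$ becomes a $k$-clique and each of the $n-k$ isolated vertices becomes a leaf attached to $v$; the resulting graph is connected, has clique number $k$, chromatic number $k$, and size ${k\choose 2}+n-k$, so it lies in $\mathcal{C}^*_k(n)$. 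Conversely, as recalled in the introduction every graph in $\mathcal{C}^*_k(n)$ has chromatic polynomial identically equal to $(x)_{\downarrow k}(x-1)^{n-k}$, so equality does hold for those graphs, completing the characterization.

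Everything here is routine once the reduction $\pi(G,x)=x\,\pi(G-v,x-1)$ is in hand; the only two points needing a little care are the identity $\chi(G-v)=k-1$ (so that the correct instance of Tomescu's theorem is invoked) and the observation that $K_{k-1}\cupdot(n-k)K_1$ with a universal vertex re-added is exactly the member of $\mathcal{C}^*_k(n)$ consisting of a $k$-clique with $n-k$ pendant leaves. I do not expect a genuine obstacle here—the lemma is essentially a one-step corollary of Theorem~\ref{tomesdisconn}—which is precisely why it serves as the $\Delta(G)=n-1$ base case feeding into the degree-sensitive bound of Theorem~\ref{maintheorem}.
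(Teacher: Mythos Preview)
Your proof is correct and follows essentially the same route as the paper's: use the universal vertex $v$ to obtain $\pi(G,x)=x\cdot\pi(G-v,x-1)$ with $\chi(G-v)=k-1$, then invoke Theorem~\ref{tomesdisconn} on $G-v$ at $x-1$ and multiply through. You are in fact a bit more explicit than the paper on justifying $\chi(G-v)=k-1$ and on the converse direction of the equality characterization, but the argument is identical in substance.
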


\begin{proof}
Let $u$ be a vertex of $G$ with maximum degree. Since $u$ is a universal vertex, it cannot be in the same colour class with any other vertex. Therefore, $\chi(G-u)=k-1$ and $\pi(G,x)=x\cdot \pi(G-u, x-1)$. Now, by Theorem~\ref{tomesdisconn}, 
$$\pi(G-u,x)\leq (x)_{\downarrow k-1}\,  x^{(n-1)-(k-1)}$$
for every $x\in \mathbb{N}$ and equality holds for $x\geq k-1$ if and only if $G-u\cong K_{k-1}\cupdot (n-k)K_1$.
Replacing $x$ with $x-1$ in the latter inequality yields
$$\pi(G-u,x-1)\leq (x-1)_{\downarrow k-1}\,  (x-1)^{n-k}$$
for every integer $x\geq 1$ and equality holds for $x\geq k$ if and only if $G-u\cong K_{k-1}\cupdot (n-k)K_1$.
Hence, the result follows as $\pi(G,x)=x\cdot \pi(G-u, x-1)$ and $(x)_{\downarrow k}=x\,(x-1)_{\downarrow k-1}$.\\
\end{proof}

\begin{theorem}\label{maintheorem}
Let $G$ be a graph in $\kchromfamily{k}$. Then for every natural number $x$,
$$\pi(G,x)\, \leq \, (x)_{\downarrow k} \, (x-1)^{\Delta(G)-(k-1)} \, x^{n-1-\Delta(G)}.$$ 
\end{theorem}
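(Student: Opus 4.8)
The plan is to prove the inequality by induction on $n$, after two harmless reductions. Since $\pi(G,x)=0$ and the right-hand side carries the factor $(x)_{\downarrow k}$, the statement is trivial when $x<k$, so assume $x\ge k$. One may also assume $G$ is connected: write $G$ as a disjoint union of its components, let $G_1$ be a component with $\chi(G_1)=k$, apply the inductive hypothesis to $G_1$ and to whichever component attains $\Delta:=\Delta(G)$ (the same component if $G_1$ attains $\Delta$), and bound every remaining component $H$ by $\pi(H,x)\le x^{|V(H)|}$; using $\Delta(G_1)\ge k-1$ (which holds because $G_1$ contains a $k$-critical subgraph) and $x\ge k$, the product of these bounds turns out to be at most $(x)_{\downarrow k}(x-1)^{\Delta-(k-1)}x^{\,n-1-\Delta}$. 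The base case $n=k$ is immediate, as then $G\cong K_k$ and $\pi(G,x)=(x)_{\downarrow k}$.

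The workhorse for the inductive step is the following estimate for a connected graph $H$ and a vertex $v$: colour $H[N[v]]$ arbitrarily, then colour the remaining $|V(H)|-1-\deg v$ vertices one at a time in an order in which each of them has an already-coloured neighbour (possible since $H$ is connected), so that each has at most $x-1$ admissible colours; this gives $\pi(H,x)\le\pi(H[N[v]],x)\,(x-1)^{\,|V(H)|-1-\deg v}$. Fix a vertex $u$ of maximum degree in $G$ and put $B=V(G)\setminus N[u]$. If some vertex $v$ satisfies $\chi(G[N[v]])=k$, then $G[N[v]]$ is $k$-chromatic with the universal vertex $v$, so Lemma~\ref{universallemma} gives $\pi(G[N[v]],x)\le(x)_{\downarrow k}(x-1)^{\deg v+1-k}$; feeding this into the estimate produces the \emph{stronger} bound $\pi(G,x)\le(x)_{\downarrow k}(x-1)^{\,n-k}$, which is at most $(x)_{\downarrow k}(x-1)^{\Delta-(k-1)}x^{\,n-1-\Delta}$ because $(x-1)^{\,n-1-\Delta}\le x^{\,n-1-\Delta}$. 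Henceforth I may therefore assume $\chi(G[N[v]])\le k-1$ for every vertex $v$, and in particular that $G$ is $K_k$-free.

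In this remaining regime I would first try to delete a vertex: if some $w$ has $\chi(G-w)=k$ and $\Delta(G-w)\ge\Delta-1$, then the inductive hypothesis for $G-w$ together with $\pi(G,x)\le(x-1)\pi(G-w,x)$ (legitimate because $w$ has a neighbour) yields the bound after a short calculation. What is left — and this is the step I expect to be the main obstacle — is the case where no such $w$ exists, so that $G$ is essentially a vertex-critical graph with a lopsided degree sequence; odd cycles, and more generally triangle-free graphs of chromatic number $3$, are the prototypes. To attack it I would put $k^{\ast}=\max_v\chi(G[N[v]])\le k-1$: the estimate above together with Lemma~\ref{universallemma} gives $\pi(G,x)\le(x)_{\downarrow k^{\ast}}(x-1)^{\,n-k^{\ast}}$, while the colouring obtained by using one block of colours on $N(u)$ and a disjoint block on $\{u\}\cup B$ (proper since $u$ has no neighbour in $B$) shows $\chi(G)\le\chi(G[N(u)])+\chi(G[B])=\chi(G[N[u]])-1+\chi(G[B])$, hence $n-1-\Delta=|B|\ge\chi(G[B])\ge k-\chi(G[N[u]])+1$. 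Comparing $(x)_{\downarrow k^{\ast}}(x-1)^{\,n-k^{\ast}}$ with the target then reduces to an elementary inequality among $x$, $k$, $k^{\ast}$ and $|B|$; its naive version is a little too weak when $k^{\ast}$ is close to $k$, so one has to improve the lower bound on $|B|$ by exploiting $K_k$-freeness. There is room to do this because the inequality of the theorem is strict except at the extremal graph — $K_k$ with $\Delta-(k-1)$ pendant vertices attached at one of its vertices and $n-1-\Delta$ isolated vertices — so the slack is available exactly in the case where the argument is delicate.
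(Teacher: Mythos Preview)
Your proposal is incomplete: the final case is left as a sketch, and as you yourself acknowledge, the ``naive version'' of the comparison does not go through. Concretely, after your reductions you are trying to show
\[
(x)_{\downarrow k^{\ast}}(x-1)^{\,n-k^{\ast}} \;\le\; (x)_{\downarrow k}(x-1)^{\Delta-(k-1)}x^{\,n-1-\Delta},
\]
which, with $d=k-k^{\ast}$ and $m=n-1-\Delta$, amounts to $(x-1)^{m+d}\le x^{m}\prod_{j=k^{\ast}}^{k-1}(x-j)$. Your lower bound $m\ge d+1$ is not enough: already for $k=4$, $k^{\ast}=3$ (so $d=1$), $m=2$, and $x=k=4$ one gets $(x-1)^{3}=27>16=x^{2}(x-3)$. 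You propose to rescue this by sharpening the bound on $|B|$ via $K_k$-freeness, but no mechanism is given, and it is not clear one exists; the structural hypotheses you have accumulated (near vertex-criticality, $K_k$-freeness, $\chi(G[N[v]])\le k-1$ for every $v$) do not obviously force $|B|$ to be large. As written, the argument simply stops at the hard case.

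The paper sidesteps all of this with a single uniform device. Pick a vertex $u$ of maximum degree and, instead of deleting vertices or looking at closed neighbourhoods, successively \emph{add} the missing edges $uv_1,\dots,uv_t$ (where $\{v_1,\dots,v_t\}=V(G)\setminus N[u]$), applying addition--contraction at each step:
\[
\pi(G,x)=\pi(G_t,x)+\sum_{i=1}^{t}\pi(H_i,x),
\]
where $G_t$ has a universal vertex and each $H_i$ has one fewer vertex with $\Delta(H_i)\ge\Delta(G)+i-1$. Lemma~\ref{universallemma} handles $G_t$, induction handles each $H_i$, and the sum telescopes as a geometric series to give exactly the desired bound. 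No connectedness reduction, no case split on $\chi(G[N[v]])$, and no delicate endgame are needed; the possible jump in chromatic number ($\chi(G_t),\chi(H_i)\in\{k,k+1\}$) is absorbed by the trivial inequality $(x)_{\downarrow k+1}(x-1)^{-1}\le (x)_{\downarrow k}$.
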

\begin{proof}
We proceed by induction on the number of vertices. For the basis step, $n = k$ and $G$ is a complete graph, so $\pi(G,x)=(x)_{\downarrow k}$. Now the result is clear as $\Delta(K_k)=k-1$.

Now we may assume that $G$ is a $k$-chromatic graph of order $n \geq k+1$. If  $\Delta(G)=n-1$ then the result follows by Lemma~\ref{universallemma}. So let us assume that $\Delta(G)<n-1$. Let $u$ be a vertex of maximum degree.  Set $t=n-1-\Delta(G)$ and let $\{v_1,\dots , v_t\}$ be the set of non-neighbours of $u$ in $G$,\ (that is, $\{v_1,\dots , v_t\}=V(G)\setminus N_G[u]$).
We set $G_0 = G$ and
$$G_i=G_{i-1}+uv_i$$
$$H_i=G_i\cdot uv_i$$
 for $i=1,\dots , t$.  By repeated use of the edge addition-contraction formula,
$$\pi(G,x)=\pi(G_t,x)+\sum_{i=1}^t\pi(H_i,x).$$
It is clear that $k\leq \chi(G_t), \chi(H_i)\leq k+1$ for $i = 1,2,\ldots,t$. Also, observe that $G_t$ is a graph of order $n$ having $\Delta(G_t)=n-1$ and each $H_i$ is a graph of order $n-1$ having $\Delta(H_i)\geq \Delta(G)+i-1$, and hence 
\[ \Delta(H_i)-\Delta(G)-i+1\geq 0.\]

\noindent \underline{Claim 1:}\, $\pi(G_t,x)\leq(x)_{\downarrow k}\,(x-1)^{n-k}$  for every $x\in \mathbb{N}$.\\

\noindent \textit{Proof of Claim 1:} Since $\Delta(G_t)=n-1$, we obtain by Lemma~\ref{universallemma} that 
$$\pi(G_t,x)\, \leq \, (x)_{\downarrow \chi(G_t)} \, (x-1)^{n-\chi(G_t)}.$$
Also, $(x)_{\downarrow \chi(G_t)} \, (x-1)^{n-\chi(G_t)}\leq (x)_{\downarrow k} \, (x-1)^{n-k}$ as $\chi(G_t)\geq k$ and $(x)_{\downarrow k+1} \, (x-1)^{n-(k+1)} \leq (x)_{\downarrow k} \, (x-1)^{n-k}$ for $x \geq k$). Hence Claim 1 follows.\\

\noindent \underline{Claim 2:} \, $\pi(H_i,x) \leq (x)_{\downarrow k}\,(x-1)^{\Delta(G)+i-k}\, x^{n-i-\Delta(G)-1}$ for every $x\in \mathbb{N}$. \\

\noindent \textit{Proof of Claim 2:}  By the induction hypothesis on $H_i$, if $\chi(H_i)=k$ then
\begin{eqnarray*}
\pi(H_i,x) & \leq & (x)_{\downarrow k}\, (x-1)^{\Delta(H_i)-k+1} x^{n-2-\Delta(H_i)}
\end{eqnarray*}
and if $\chi(H_i)=k+1$ then 
\begin{eqnarray*}
\pi(H_i,x) & \leq & (x)_{\downarrow k+1}\, (x-1)^{\Delta(H_i)-(k+1)+1}\, x^{(n-1)-1-\Delta(H_i)}\\
              & = & (x)_{\downarrow k+1}(x-1)^{\Delta(H_i)-k} x^{n-2-\Delta(H_i)}\\
              & \leq & (x)_{\downarrow k}(x-1)^{\Delta(H_i)-(k-1)} x^{n-2-\Delta(H_i)}
              \end{eqnarray*}
for every $x\in \mathbb{N}$.

Since $\Delta(H_i)-\Delta(G)-i+1\geq 0$, we find that
$$(x-1)^{\Delta(H_i)-\Delta(G)-i+1}\leq x^{\Delta(H_i)-\Delta(G)-i+1},$$
which is equivalent to
$$(x-1)^{\Delta(H_i)-k+1}x^{n-2-\Delta(H_i)}\leq (x-1)^{\Delta(G)+i-k} x^{n-i-\Delta(G)-1} .$$
This completes the proof of Claim $2$.

\vspace{0.25in}

The inequality proven in Claim $2$ yields
\begin{eqnarray*}
\sum_{i=1}^t\pi(H_i,x) & \leq & \sum_{i=1}^t (x)_{\downarrow k}\,(x-1)^{\Delta(G)+i-k} \, x^{n-i-\Delta(G)-1}\\
&=&(x)_{\downarrow k}(x-1)^{\Delta(G)-k}x^{n-\Delta(G)-1}\sum_{i=1}^t\left(\frac{x-1}{x}\right)^i
\end{eqnarray*}

Summing the geometric series, we find
$$\sum_{i=1}^t\left(\frac{x-1}{x}\right)^i=\frac{1-\left(\frac{x-1}{x}\right)^{t+1}}{1-\left(\frac{x-1}{x}\right)}-1.$$
Now, simplifying the expression on the right hand side of the latter equality and then substituting $t=n-1-\Delta(G)$ we get
$$\sum_{i=1}^t\left(\frac{x-1}{x}\right)^i=(x-1)-\frac{(x-1)^{n-\Delta(G)}}{x^{n-1-\Delta(G)}}.$$
Therefore, 
\begin{eqnarray*}
\sum_{i=1}^t\pi(H_i,x) & \leq & (x)_{\downarrow k}(x-1)^{\Delta(G)-k}x^{n-\Delta(G)-1}\left( (x-1)-\frac{(x-1)^{n-\Delta(G)}}{x^{n-1-\Delta(G)}}\right)\\
& = & (x)_{\downarrow k}\left((x-1)^{\Delta(G)-k+1}x^{n-\Delta(G)-1}-(x-1)^{n-k}\right).
\end{eqnarray*}
Furthermore, recall that $\pi(G_t,x) \leq (x)_{\downarrow k}\,(x-1)^{n-k}$ by the inequality proven in Claim $1$, so  
\begin{eqnarray*}
\pi(G,x) & = & \pi(G_t,x)+\sum_{i=1}^t\pi(H_i,x)\\
           & \leq & (x)_{\downarrow k}\,(x-1)^{n-k} + (x)_{\downarrow k}\left((x-1)^{\Delta(G)-k+1}x^{n-\Delta(G)-1}-(x-1)^{n-k}\right)\\
 & = & (x)_{\downarrow k} \, (x-1)^{\Delta(G)-k+1} \, x^{n-\Delta(G)-1}
\end{eqnarray*}
and we are done.
\end{proof}

\subsection{Maximizing the number of colourings for connected  graphs of fixed order and chromatic number}

Conjecture~\ref{tomesdongconj} is true for many graph families. For example, Tomescu~\cite{tomescu} proved it for  $ k=4$  under the additional restriction of $G$ being also planar. Also, it is easy to see that if the clique number of graph $G$ in $\mathcal{C}_k(n)$ is equal to $k$ then $G$ contains a spanning subgraph which is isomorphic to a graph in $\mathcal{C}^*_k(n)$. Therefore, Conjecture~\ref{tomesdongconj} holds for every graph $G$ in  $\mathcal{C}_k(n)$ having $\omega(G)=k$ (such graphs include all {\em perfect} graphs \cite{westbook}).

It is known that (see, for example, \cite{tomescufrench, westbook}) the minimum number of edges of a graph in $\mathcal{C}_k(n)$ is equal to ${k \choose 2}+n-k$. Furthermore, when $k=3$, the extremal graphs are unicyclic graphs with an odd cycle, and when $k\neq 3$, extremal graphs belong to $\mathcal{C}^*_k(n)$. As chromatic polynomial of a graph of order $n$ with $m$ edges has the form $\pi(G,x) = x^{n} - m\,x^{n-1} + \cdots$ it is not difficult to see that Conjecture~\ref{tomesdongconj} holds for all sufficiently large $x$. However it becomes quite difficult to find the smallest such value of $x$.

We begin with a lemma which gives an upper bound for the number of colour partitions of a graph.

\begin{lemma}\label{boundpartitions}Let $G$ be a graph of order $n$ and size $m$. Then for $1\leq i \leq n-1$,
$$a_i(G)\leq \frac{1}{(n-i)!}\left( {n \choose 2} -m \right)^{n-i}.$$

\end{lemma}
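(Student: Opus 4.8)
The plan is to bound $a_i(G)$ by first passing to a simpler parameter: the number of non-edges of $G$. Let $\overline{m} = \binom{n}{2} - m$ denote the number of non-edges of $G$. I would show that among all graphs of order $n$ with a prescribed number of non-edges, the quantity $a_i$ is maximized — or at least bounded by the corresponding value — when the non-edges are concentrated as efficiently as possible, and then compute that extremal value explicitly. The cleanest route is induction on the number of non-edges $\overline{m}$, using the edge addition–contraction recurrence $a_i(G) = a_i(G+uv) + a_i(G\cdot uv)$ for a non-adjacent pair $u,v$. Adding an edge decreases $\overline{m}$ by one; contracting an edge produces a graph on $n-1$ vertices whose number of non-edges is at most $\overline{m} - 1$ (contraction can only identify vertices and can only remove, never create, non-edges among the remaining pairs). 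So if the bound $a_i(H) \le \frac{1}{(n'-i)!}\,(\overline{m}_H)^{\,n'-i}$ is known for all graphs $H$ with fewer non-edges (on $n$ or on $n-1$ vertices), applying it to $G+uv$ (with $n$ vertices, $\overline m - 1$ non-edges) and to $G\cdot uv$ (with $n-1$ vertices, at most $\overline m - 1$ non-edges) should give the result for $G$ after a routine binomial-type estimate.

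Concretely, after the inductive step I expect to need an inequality of the shape
\[
\frac{(\overline{m}-1)^{\,n-i}}{(n-i)!} + \frac{(\overline{m}-1)^{\,n-1-i}}{(n-1-i)!} \;\le\; \frac{\overline{m}^{\,n-i}}{(n-i)!}.
\]
Multiplying through by $(n-i)!$, this reduces to $(\overline{m}-1)^{\,n-i} + (n-i)(\overline{m}-1)^{\,n-1-i} \le \overline{m}^{\,n-i}$, i.e. $(\overline m - 1)^{\,n-1-i}\big((\overline m - 1) + (n-i)\big) \le \overline m^{\,n-i}$, which follows once one checks $(\overline m - 1)^{\,n-1-i}(\overline m + n - i - 1) \le \overline m^{\,n-i}$; writing $a = \overline m - 1$ and $d = n-i$ this is $a^{\,d-1}(a+d) \le (a+1)^d$, which holds by the binomial theorem since $(a+1)^d \ge a^d + d\,a^{d-1} = a^{d-1}(a+d)$. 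The base cases are easy: if $\overline m = 0$ then $G = K_n$, so $a_i(G) = 0$ for $i < n$ (and the right-hand side is $0$ as well for $1 \le i \le n-1$), and if $n - i$ is such that the graph is already complete the inequality is trivial; one also handles $i = n-1$ separately, where $a_{n-1}(G)$ equals the number of non-edges $\overline m$ and the bound reads $a_{n-1}(G) \le \overline m$, with equality.

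The main obstacle I anticipate is the bookkeeping around the contraction term: one must argue carefully that $G\cdot uv$ really does have at most $\overline{m} - 1$ non-edges and fits the induction hypothesis (the index range $1 \le i \le (n-1)-1$ must still make sense, and the case $i = n-1$ on the contracted graph, which has only $n-1$ vertices, needs the separate leaf-case argument). There is also a mild subtlety in choosing the induction variable: a double induction on $(n, \overline m)$ — or equivalently a single induction on $\overline m$ with the statement quantified over all $n$ — is needed so that the contraction term, which lives on fewer vertices, is covered. Once the recurrence and the elementary binomial inequality above are in place, everything else is a direct verification.
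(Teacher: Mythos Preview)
Your proposal is correct and follows essentially the same argument as the paper: induction on the number of non-edges via the addition--contraction recurrence, the separate treatment of $i=n-1$, and the binomial estimate $(a+1)^d \ge a^d + d\,a^{d-1}$ to combine the two inductive bounds. The paper's proof is exactly this, with the same base case $G=K_n$ and the same observation that both $G+uv$ and $G\cdot uv$ have strictly fewer non-edges than $G$.
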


\begin{proof}
We proceed by induction on ${n \choose 2}-m$, the number of non-edges of the graph. For the basis step, suppose that $G$ is a complete graph. Then, $a_i(G)=0$ for $1\leq i \leq n-1$ and $a_n(G)=1$. Hence the result is clear. Now we may assume that $G$ has at least one pair of nonadjacent vertices, say $u$ and $v$. The graph $G+uv$ has order $n$ and size $m+1$. Also, the graph $G\cdot uv$ has order $n-1$ and size $m-|N_G(u)\cap N_G(v)|$. Thus the number of non-edges of $G+uv$ and $G\cdot uv$ are strictly less than the number of non-edges of $G$. Note that if $i=n-1$ then the result is clear since $a_{n-1}(G)={n\choose 2}-m$, so we may assume that $1\leq i \leq n-2$. 
Set $\beta={n \choose 2}-m$. Then by the induction hypothesis,
$$a_i(G+uv)\leq \frac{1}{(n-i)!}(\beta -1)^{n-i}$$
and $$a_i(G\cdot uv)\leq \frac{1}{(n-1-i)!}(\beta -1)^{n-1-i}.$$
By the edge addition-contraction formula,
$$a_i(G)=a_i(G+uv)+a_i(G\cdot uv).$$ Therefore,
\begin{eqnarray*}
a_i(G) & \leq & \frac{1}{(n-i)!}(\beta -1)^{n-i} + \frac{1}{(n-1-i)!}(\beta -1)^{n-1-i}\\
& = & \frac{1}{(n-i)!}\left( (\beta -1)^{n-i} + (n-i) (\beta -1)^{n-1-i} \right)\\
& \leq & \frac{1}{(n-i)!} \sum_{j=0}^{n-i} {n-i \choose j}(\beta -1)^{n-i-j}\\
& = & \frac{1}{(n-i)!} \, \beta^{n-i}.\\
\end{eqnarray*}
Thus, the proof is complete.
\end{proof}

\vspace{0.25in}
Let $f(z)=\sum_{i=0}^d c_i z^i$ be a real polynomial of degree $d \geq 1$. Then the \textit{Cauchy bound of f} (see, for example, \cite[pg.\,243]{rahman}), denoted by $\rho(f)$, is defined as the unique positive root of the equation 
\begin{center}
$|c_0|+|c_1|x+\dots +|c_{d-1}|x^{d-1}=|c_d|x^d$
\end{center}
when $f$ is not a monomial, and zero otherwise (the fact that $f$ has a unique positive real root follows from the intermediate value theorem and Descartes' rule of signs). It is known that the maximum of the moduli of the roots of $f$  is bounded by $\rho(f)$, and  the Cauchy bound satisfies (see \cite[pg. 247]{rahman})
\begin{equation}\label{cauchybound}
\rho(f)\leq 2\, \operatorname{max} \left\lbrace \left| \frac{c_i}{c_d} \right|^{1/(d-i)} \right\rbrace_{0\leq i\leq d-1}.
\end{equation}

\vspace{0.25in}

Let $\xi_1,\xi_2,\dots$ be a sequence of real numbers. Then the polynomials
$$P_0(z):=1, \qquad P_d(z):=\prod_{j=1}^d(z-\xi_j) \qquad (d=1,2,\dots)\label{newton}$$
are called the \textit{Newton bases with respect to the nodes} $\xi_1,\xi_2,\dots$; they form a basis for the vector space of all real polynomials \cite[pg. 256]{rahman}. 

\begin{theorem}\cite[pg. 266]{rahman} \label{rahman}
Let $f(z)=\sum_{j=0}^dc_jP_j(z)$ be a polynomial of degree $d$ where $P_j$'s are the Newton bases with respect to the nodes $\xi_1,\dots \xi_d$. Then $f$ has all its roots in the union of the discs
$$\mathcal{D}_j:=\{z\in \mathbb{C} : |z-\xi_j|\leq \rho\} \qquad (j=1,\dots,d)$$ 
where $\rho$ is the Cauchy bound of $\sum_{j=0}^dc_jz^j$.
\end{theorem}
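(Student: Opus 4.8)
The plan is to argue by contradiction against the defining equation of the Cauchy bound. Suppose, for contradiction, that $z_0\in\mathbb{C}$ is a root of $f$ that lies outside every disc $\mathcal{D}_j$, so that $|z_0-\xi_j|>\rho$ for all $j=1,\dots,d$; I would then show this forces the strict inequality $|c_0|+|c_1|\rho+\cdots+|c_{d-1}|\rho^{d-1}<|c_d|\rho^{d}$, contradicting the fact that $\rho$ is a root of $|c_0|+|c_1|x+\cdots+|c_{d-1}|x^{d-1}=|c_d|x^{d}$.

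First I would dispose of the degenerate case in which the polynomial $\sum_{j=0}^{d}c_jz^j$ is a monomial, i.e.\ $c_0=\cdots=c_{d-1}=0$. Then $\rho=0$, $f=c_dP_d$, and the roots of $f$ are exactly $\xi_1,\dots,\xi_d$, each lying in the (now single-point) disc $\mathcal{D}_j=\{\xi_j\}$; so the conclusion holds. Hence from now on I may assume $\rho>0$ and that at least one of $c_0,\dots,c_{d-1}$ is nonzero.

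For the main step, note that the hypothesis $|z_0-\xi_j|>\rho>0$ for every $j$ implies $z_0\neq\xi_j$ for all $j$, hence $P_j(z_0)\neq 0$ for $j=0,1,\dots,d$; in particular we may divide by $P_d(z_0)$. From $f(z_0)=0$ we have $c_dP_d(z_0)=-\sum_{j=0}^{d-1}c_jP_j(z_0)$, and since $P_d(z_0)/P_j(z_0)=\prod_{i=j+1}^{d}(z_0-\xi_i)$, the triangle inequality gives
\[
|c_d|\;\le\;\sum_{j=0}^{d-1}|c_j|\,\prod_{i=j+1}^{d}\frac{1}{|z_0-\xi_i|}\;<\;\sum_{j=0}^{d-1}|c_j|\,\rho^{-(d-j)},
\]
where the strict inequality uses that each factor satisfies $1/|z_0-\xi_i|<1/\rho$ (there being $d-j\ge 1$ of them) together with the fact that some $c_j$ with $j<d$ is nonzero. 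Multiplying through by $\rho^{d}>0$ yields $|c_d|\rho^{d}<\sum_{j=0}^{d-1}|c_j|\rho^{j}$, which is the promised contradiction (recall $c_d\neq 0$ since $\deg f=d$). This proves every root lies in $\bigcup_{j=1}^{d}\mathcal{D}_j$.

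The computation itself is short once set up; the parts I would be most careful about are (i) separating off the monomial case so that $\rho>0$ and the strict estimates are legitimate, and (ii) verifying $P_j(z_0)\neq 0$ before dividing, which is precisely what makes the telescoping ratio $P_d(z_0)/P_j(z_0)$ available and estimable termwise. No finer tool — not even the explicit bound \eqref{cauchybound} — is required for this statement; it is exactly the classical Cauchy-bound argument transported from the monomial basis to the Newton basis, with the powers $\rho^{d-j}$ replaced by the products $\prod_{i=j+1}^{d}|z_0-\xi_i|$.
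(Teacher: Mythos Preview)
The paper does not contain a proof of this theorem: it is quoted verbatim from Rahman and Schmeisser's monograph (the \texttt{\textbackslash cite[pg.~266]\{rahman\}} in the statement), and is used only as a black box in the proof of Theorem~\ref{thmforxlargeenough}. Consequently there is no in-paper argument to compare your proposal against.

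That said, your argument is correct and is essentially the standard proof one finds in the cited source. The contradiction you set up is the right one: from $c_d=-\sum_{j<d}c_j\,P_j(z_0)/P_d(z_0)$ and $|z_0-\xi_i|>\rho$ you get $|c_d|<\sum_{j<d}|c_j|\rho^{-(d-j)}$, i.e.\ $|c_d|\rho^{d}<\sum_{j<d}|c_j|\rho^{j}$, contradicting the defining equation of the Cauchy bound. Your handling of the two delicate points---separating off the monomial case so that $\rho>0$ and the strict inequality is legitimate, and checking $P_j(z_0)\neq 0$ before dividing---is careful and complete. Nothing is missing.
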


\begin{theorem}\label{thmforxlargeenough}
Let $G$ be a graph in $\mathcal{C}_k(n) \setminus \mathcal{C}^*_k(n)$ where $k\geq 4$. Then 
$$\frac{1}{(x)_{\downarrow k}} \pi(G,x)<  (x-1)^{n-k}$$
for every real number $x$ where $x> n-2+\left( {n \choose 2} -{k \choose 2} -n+k  \right)^2$.
\end{theorem}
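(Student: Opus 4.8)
The plan is to compare $\pi(G,x)$ directly with the chromatic polynomial of a graph $G_0\in\mathcal{C}^*_k(n)$, namely $\pi(G_0,x)=(x)_{\downarrow k}(x-1)^{n-k}$, and to control the location of the roots of the difference. We may assume $n\geq k+1$, since otherwise $\mathcal{C}_k(n)\setminus\mathcal{C}^*_k(n)=\emptyset$. Put
\[
R(x):=\pi(G_0,x)-\pi(G,x)=\sum_{i=k}^{n}\bigl(a_i(G_0)-a_i(G)\bigr)\,(x)_{\downarrow i},
\]
and set $c_i:=a_i(G_0)-a_i(G)$ and $N:={n\choose 2}-{k\choose 2}-n+k$. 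Since $a_n(G_0)=a_n(G)=1$, the degree of $R$ is at most $n-1$, and its coefficient of $(x)_{\downarrow n-1}$ is $c_{n-1}=a_{n-1}(G_0)-a_{n-1}(G)=|E(G)|-\bigl({k\choose 2}+n-k\bigr)$. Because $k\geq 4$ and $G\notin\mathcal{C}^*_k(n)$, the known description of the minimum-size graphs in $\mathcal{C}_k(n)$ forces $|E(G)|\geq{k\choose 2}+n-k+1$, so $c_{n-1}$ is a positive integer; in particular $\deg R=n-1$ and the leading coefficient of $R$, viewed as an ordinary polynomial, is positive, whence $R(x)\to+\infty$ as $x\to+\infty$. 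It therefore suffices to prove that every real root of $R$ is at most $n-2+N^2$: for then $R$ has no real zero in $(n-2+N^2,\infty)$, so, being continuous with $R(x)\to+\infty$, it is positive there; this gives $\pi(G,x)<(x)_{\downarrow k}(x-1)^{n-k}$, and since $n-2+N^2>k-1$ we may divide by $(x)_{\downarrow k}>0$ to obtain the claim.

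The displayed identity exhibits $R$ in the Newton basis with respect to the nodes $\xi_j=j-1$ ($j=1,\dots,n-1$), because $P_j(x)=\prod_{i=1}^{j}(x-(i-1))=(x)_{\downarrow j}$; the corresponding coefficients are $c_0=\cdots=c_{k-1}=0$ together with $c_k,\dots,c_{n-1}$ as above. By Theorem~\ref{rahman}, every root $z$ of $R$ satisfies $|z-(j-1)|\leq\rho$ for some $j\in\{1,\dots,n-1\}$, where $\rho$ is the Cauchy bound of $g(z):=\sum_{i=k}^{n-1}c_i z^i$; consequently $\operatorname{Re}(z)\leq(n-2)+\rho$. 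So the whole theorem comes down to the single estimate $\rho\leq N^2$.

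To obtain that estimate I would combine Lemma~\ref{boundpartitions} with the Cauchy-bound inequality~(\ref{cauchybound}). Since $|E(G)|\geq{k\choose 2}+n-k$, both $G$ and $G_0$ have at most $N$ non-edges, so Lemma~\ref{boundpartitions} gives $a_i(G),a_i(G_0)\leq\frac{1}{(n-i)!}N^{\,n-i}$, and hence $|c_i|\leq\frac{1}{(n-i)!}N^{\,n-i}$ for $k\leq i\leq n-1$; also $|c_{n-1}|\geq 1$. Writing $\ell=n-1-i$, for every $i$ with $c_i\neq 0$ this yields
\[
\left|\frac{c_i}{c_{n-1}}\right|^{1/(n-1-i)}\;\leq\;\left(\frac{N^{\,\ell+1}}{(\ell+1)!}\right)^{1/\ell}\;\leq\;\frac{N^{2}}{2},
\]
the last inequality being equivalent to $2^{\ell}\leq(\ell+1)!\,N^{\ell-1}$, which holds for every $\ell\geq 1$ (with equality at $\ell=1$, where the factor $\tfrac{1}{2!}$ coming from the lemma is exactly what is needed). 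Plugging this into~(\ref{cauchybound}) gives $\rho\leq 2\cdot\frac{N^{2}}{2}=N^{2}$; if $g$ happens to be a monomial then $\rho=0\leq N^2$ trivially. This completes the reduction, and hence the proof.

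The step I expect to be the main obstacle is precisely the estimate $\rho\leq N^{2}$. Feeding~(\ref{cauchybound}) the crude bound $|c_i|\leq N^{\,n-i}$ only produces $\rho\leq 2N^{2}$; it is essential to retain the factorial denominators furnished by Lemma~\ref{boundpartitions} — and in particular the $\tfrac{1}{2!}$ occurring for the top non-leading coefficient — so that they absorb the factor $2$ inherent in the general Cauchy estimate. A secondary point requiring care, and the place where the hypotheses $k\geq 4$ and $G\notin\mathcal{C}^*_k(n)$ actually enter, is the verification that $c_{n-1}=|E(G)|-{k\choose 2}-n+k$ is positive, i.e.\ that a connected $k$-chromatic graph of order $n$ with $k\geq 4$ attaining the minimum possible number of edges must lie in $\mathcal{C}^*_k(n)$; this is exactly the known structure of the minimum-size graphs in $\mathcal{C}_k(n)$ when $k\neq 3$.
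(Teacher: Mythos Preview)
Your argument is correct and follows essentially the same route as the paper: express the difference of chromatic polynomials in a Newton basis, invoke Theorem~\ref{rahman}, and bound the Cauchy radius via Lemma~\ref{boundpartitions} and inequality~(\ref{cauchybound}) to obtain $\rho\le N^2$. The only cosmetic difference is that the paper first divides by $(x)_{\downarrow k}$, obtaining a polynomial of degree $n-k-1$ with Newton nodes $k,\dots,n-2$, whereas you keep $R$ of degree $n-1$ with nodes $0,\dots,n-2$; after the substitution $r=\ell+1$ the Cauchy--bound estimates coincide, and since the largest node is $n-2$ in both setups the conclusion is identical.
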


\begin{proof} Let $G^*$ be a graph in $\mathcal{C}^*_k(n)$. Then $\pi(G^*,x)=(x)_{\downarrow k}(x-1)^{n-k}$. Let 
\begin{eqnarray*}
f(x) &=& \frac{1}{(x)_{\downarrow k}} \left( \pi(G^*,x)- \pi(G,x) \right)\\
 &=& \frac{1}{(x)_{\downarrow k}} \sum_{r=k}^n\left( a_r(G^*)- a_r(G) \right)(x)_{\downarrow r}.
\end{eqnarray*}
Now, $a_n(G)=a_n(G^*)=1$. Also, $a_{n-1}(G^*)={n \choose 2}-{k \choose 2}-(n-k)$ and $a_{n-1}(G)={n \choose 2}-m$. Since $m>{k \choose 2}+(n-k)$ we have   $a_{n-1}(G^*) >  a_{n-1}(G)$. Therefore, $f(x)$ is a polynomial of degree $n-k-1$ with the leading coefficient  $a_{n-1}(G^*)-a_{n-1}(G)>0$. As the leading coefficient of the polynomial $f$ is  positive, it suffices to show that the largest real root of $f$ is at most  $n-2+\left( {n \choose 2} -{k \choose 2} -n+k  \right)^2$. Indeed, we shall prove a stronger statement, namely that if $z\in \mathbb{C}$ is a root of $f$ then $\Re(z)\leq n-2+\left( {n \choose 2} -{k \choose 2} -n+k  \right)^2$.

Set $\alpha_r=a_r(G^*)-a_r(G)$. Thus $\alpha_{n-1}=a_{n-1}(G^*)-a_{n-1}(G) > 0 $ and all $\alpha_r$'s are integers, and 
\[ f(x) = \alpha_k+ \alpha_{k+1}(x-k)+ \alpha_{k+2}(x-k)(x-k-1)+\cdots +\alpha_{n-1}(x-k)\cdots (x-n+2)\]
that is, $$f(x)=\sum_{j=0}^{n-1-k}\alpha_{k+j}\,  P_j(x)$$ where $P_j(x)$'s are Newton bases with respect to nodes $k, \ k+1, \dots , \ n-2$.

By Theorem~\ref{rahman}, $f$ has all its roots in the union of the discs centered at $$k, \ k+1, \dots , n-3, \ n-2$$
each of radius $\rho$ where $\rho$ is the Cauchy bound of 
$$g=\alpha_{n-1}z^{n-k-1}+\alpha_{n-2}z^{n-k-2}+\alpha_{n-3}z^{n-k-3}+\cdots +\alpha_k.$$ By the inequality given in (\ref{cauchybound}), the Cauchy bound of $g$ satisfies
$$\rho\leq 2 \operatorname{max} \left\lbrace \left| \frac{\alpha_{n-r}}{\alpha_{n-1}} \right|^{1/(r-1)} \right\rbrace_{2\leq r\leq n-k}.$$
Note that as all of the $\alpha_{r}$'s are integers with $\alpha_{n-1} > 0 $,
$$ \left| \frac{\alpha_{n-r}}{\alpha_{n-1}} \right| \leq |\alpha_{n-r}| \leq \operatorname{max} \{a_{n-r}(G), a_{n-r}(G^*)\}.$$
Moreover, by Lemma~\ref{boundpartitions},
$$a_{n-r}(G)\leq  \frac{\left( {n \choose 2} -m  \right)^r}{r!} \, \, \, \,\,\,   \text{and} \, \,\, \, \, a_{n-r}(G^*)\leq  \frac{\left( {n \choose 2} -{k\choose 2}-n+k  \right)^r}{r!}.$$
Now, since $m> {k\choose 2}+n-k $ we obtain that
$$\operatorname{max} \{a_{n-r}(G), a_{n-r}(G^*)\} \leq  \frac{\left( {n \choose 2} -{k\choose 2}-n+k  \right)^r}{r!}.$$
So,
$$\left| \frac{\alpha_{n-r}}{\alpha_{n-1}} \right|^{1/(r-1)} \leq \left( \frac{\left( {n \choose 2} -{k \choose 2} -n+k  \right)^r}{r!} \right)^{1/(r-1)}=\frac{\left( {n \choose 2} -{k \choose 2} -n+k  \right)^{r/(r-1)}}{(r!)^{1/(r-1)}}.$$
As $r$ increases, $\left( {n \choose 2} -{k \choose 2} -n+k  \right)^{r/(r-1)}$ decreases and $(r!)^{1/(r-1)}$ increases. Hence \[ \left\lbrace \frac{\left( {n \choose 2} -{k \choose 2} -n+k  \right)^{r/(r-1)}}{(r!)^{1/(r-1)}} \right\rbrace_{2\leq r\leq n-k}\] is a decreasing sequence and therefore,
$$ \operatorname{max} \left\lbrace \left| \frac{\alpha_{n-r}}{\alpha_{n-1}} \right|^{1/(r-1)} \right\rbrace_{2\leq r\leq n-k} \leq \frac{\left( {n \choose 2} -{k \choose 2} -n+k  \right)^2}{2} .$$
Thus, we obtain that $\rho \leq \left( {n \choose 2} -{k \choose 2} -n+k  \right)^2$ and the result follows.
\end{proof}

\subsection{Chromatic Roots}

A {\em chromatic root} is a root of a chromatic polynomial. The chromatic number of $G$ is one more than the largest {\em integer} chromatic root of $G$. There has been considerable interest in chromatic roots,  particularly on bounding the moduli of the roots (see, for example, \cite[Ch. 14]{dongbook}.
In this section, by considering the complete graph expansion of the chromatic polynomial, we will give a new bound for the moduli of chromatic roots of all graphs. This bound is sharp and the equality is obtained when the graph is a complete graph.

We will need the following theorem that locates the roots of a polynomial expressed in terms of Newton bases.

\begin{theorem}\label{rahmannewtonbound}\cite[pg.\,267]{rahman}
Let $f(z)=\sum_{i=0}^d c_i P_i(z)$ be a polynomial of degree $d$ where $P_i(z)$'s are Newton bases with respect to the nodes $\xi_1 ,\dots, \xi_d$.  Denote by $\rho$ the Cauchy bound of $c_dz^d+\sum_{i=0}^{d-2}c_i z^i$. Then $f$ has all its roots in the union $\mathcal{U}$ of the discs centered at $\xi_1 ,\dots \xi_{d-1}, \xi_d-\frac{c_{d-1}}{c_d}$, each of radius $\rho$.
\end{theorem}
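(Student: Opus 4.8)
The plan is to deduce Theorem~\ref{rahmannewtonbound} from the already-available Theorem~\ref{rahman} by a change of Newton basis that shifts \emph{only} the last node in order to annihilate the second-to-leading coefficient. The engine of the argument is the elementary factorization
\[
c_d P_d(z) + c_{d-1}P_{d-1}(z) = c_d(z-\xi_1)\cdots(z-\xi_{d-1})\Bigl(z-\xi_d+\tfrac{c_{d-1}}{c_d}\Bigr),
\]
obtained by pulling the common factor $P_{d-1}(z)=(z-\xi_1)\cdots(z-\xi_{d-1})$ out of the two top-degree terms of $f$. Here $c_d\neq 0$ since $\deg f=d$, so the division is legitimate. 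Setting $\xi_d':=\xi_d-\frac{c_{d-1}}{c_d}$ and keeping $\xi_j':=\xi_j$ for $j\leq d-1$, the right-hand side is exactly $c_d P_d'(z)$, where $P_j'$ denotes the Newton basis with respect to the new nodes $\xi_1',\dots,\xi_d'$.

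Next I would re-express $f$ in the primed basis. Since only the last node is altered, $P_j'(z)=P_j(z)$ for every $j\leq d-1$; only the top basis element changes. Substituting the identity above into $f=\sum_{i=0}^{d-2}c_iP_i + c_{d-1}P_{d-1}+c_dP_d$ and using $P_i=P_i'$ for $i\leq d-1$, the two top terms collapse to $c_dP_d'$, so that
\[
f(z)=\sum_{i=0}^{d-2}c_i P_i'(z) + c_d P_d'(z).
\]
Thus, written in the Newton basis relative to $\xi_1',\dots,\xi_d'$, the polynomial $f$ has coefficients $c_i'=c_i$ for $i\leq d-2$, together with $c_{d-1}'=0$ and $c_d'=c_d$. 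In particular the companion polynomial $\sum_{i=0}^d c_i' z^i$ is precisely $c_dz^d+\sum_{i=0}^{d-2}c_iz^i$, whose Cauchy bound is the quantity $\rho$ appearing in the statement.

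Finally I would simply invoke Theorem~\ref{rahman} for $f$ in this new representation: all roots of $f$ lie in the union of the discs of radius $\rho$ centered at $\xi_1',\dots,\xi_d'$, that is, at $\xi_1,\dots,\xi_{d-1},\ \xi_d-\frac{c_{d-1}}{c_d}$, which is exactly $\mathcal{U}$. I do not expect a genuine obstacle here; the only real content is recognizing that a single node-shift kills the $z^{d-1}$ term, and the only points needing care are the degenerate cases. If $c_{d-1}=0$ the node is unshifted and the claim is literally Theorem~\ref{rahman}; if every $c_i=0$ for $i\leq d-2$ the companion polynomial is a monomial, $\rho=0$, and $f=c_dP_d'$ has its roots exactly at the primed nodes, so the (point) discs still contain them.

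Should one prefer a self-contained route avoiding Theorem~\ref{rahman}, the same conclusion follows from first principles once $f$ is in the primed basis: if $z_0$ were a root lying outside every disc, then $r_j:=|z_0-\xi_j'|>\rho$ for all $j$, so $\prod_{j\le i}r_j<\rho^{-(d-i)}\prod_{j\le d}r_j$ for each $i\leq d-2$. Combining this with the defining equation $|c_d|\rho^d=\sum_{i=0}^{d-2}|c_i|\rho^i$ of the Cauchy bound and the triangle inequality applied to $c_dP_d'(z_0)=-\sum_{i=0}^{d-2}c_iP_i'(z_0)$ yields the strict chain $|c_dP_d'(z_0)|<|c_d|\prod_{j\le d}r_j=|c_dP_d'(z_0)|$, a contradiction.
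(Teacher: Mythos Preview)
Your argument is correct. The key identity
\[
c_dP_d(z)+c_{d-1}P_{d-1}(z)=c_dP_{d-1}(z)\Bigl(z-\xi_d+\tfrac{c_{d-1}}{c_d}\Bigr)
\]
is exactly what is needed, and the rest follows immediately from Theorem~\ref{rahman} applied to the shifted node sequence. The degenerate cases are handled properly, and your optional self-contained tail argument is also sound (note that $\rho>0$ forces some $c_i\neq 0$ with $i\le d-2$, which is what makes the final chain of inequalities strict).

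However, there is nothing to compare against: the paper does not supply its own proof of this statement. Theorem~\ref{rahmannewtonbound} is quoted from \cite[pg.~267]{rahman} and used as a black box in the proof of Theorem~\ref{chromaticboundaysel}. So you have in fact gone beyond what the paper does by providing a derivation, and your derivation---reducing to Theorem~\ref{rahman} via a single node shift that kills the degree-$(d-1)$ coefficient---is the natural one.
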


We are ready to prove our new bound on chromatic roots. 

\begin{theorem}\label{chromaticboundaysel}
Let $G$ be a $k$-chromatic graph of order $n$ and size $m$. Then $\pi(G,z)$ has all its roots in $\{0,1,\dots , k-1\}\cup \mathcal{U}$ where $\mathcal{U}$ is the union of the discs centered at 
$$k, \, k+1,\dots ,n-2, \, n-1-{n \choose 2}+m,$$
each of radius $\sqrt{2} \left( {n\choose 2} -m \right) $.
Thus the moduli of a chromatic root of a graph of order $n$ with $m$ edges is bounded above by $n-1 + \sqrt{2} \left( {n \choose 2} - m \right)$.
\end{theorem}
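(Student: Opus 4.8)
The plan is to expand $\pi(G,z)$ in the \emph{complete graph expansion}, i.e.\ in the basis $(z)_{\downarrow i}$, and to peel off the factors corresponding to the integer roots $0,1,\dots,k-1$ before applying Theorem~\ref{rahmannewtonbound}. Concretely, write $\pi(G,z)=\sum_{i=k}^{n}a_i(G)\,(z)_{\downarrow i}$. Since $(z)_{\downarrow i}=(z)_{\downarrow k}\cdot (z-k)(z-k-1)\cdots(z-i+1)$ for each $i\geq k$, we may factor out $(z)_{\downarrow k}=z(z-1)\cdots(z-k+1)$ to obtain $\pi(G,z)=(z)_{\downarrow k}\,h(z)$ where
\[
h(z)=\sum_{i=k}^{n}a_i(G)\,(z-k)(z-k-1)\cdots(z-i+1)=\sum_{j=0}^{n-k}a_{k+j}(G)\,P_j(z),
\]
with $P_j$ the Newton bases with respect to the nodes $k,k+1,\dots,n-1$. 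The roots of $\pi(G,z)$ are then exactly $\{0,1,\dots,k-1\}$ together with the roots of $h$, so it suffices to locate the roots of $h$.

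Next I would apply Theorem~\ref{rahmannewtonbound} to $h$, which has degree $d=n-k$, leading coefficient $c_d=a_n(G)=1$, and next coefficient $c_{d-1}=a_{n-1}(G)={n\choose 2}-m$ (using the standard fact that $a_{n-1}(G)$ counts the non-edges of $G$). The theorem then places all roots of $h$ in the union of discs centered at the nodes $k,k+1,\dots,n-2$ together with the shifted node $\xi_d-\dfrac{c_{d-1}}{c_d}=(n-1)-\bigl({n\choose 2}-m\bigr)$, each of radius $\rho$, where $\rho$ is the Cauchy bound of $z^{n-k}+\sum_{i=0}^{n-k-2}c_i z^i$. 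This matches the centers claimed in the statement; it remains only to bound $\rho$ by $\sqrt{2}\bigl({n\choose 2}-m\bigr)$.

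For the radius bound, set $\beta={n\choose 2}-m$. By Lemma~\ref{boundpartitions}, $c_i=a_{k+i}(G)\leq \dfrac{\beta^{\,n-k-i}}{(n-k-i)!}$ for $0\leq i\leq n-k-1$, and in particular $|c_i/c_d|=|c_i|\leq \beta^{\,d-i}/(d-i)!$. Plugging into the Cauchy-bound estimate \eqref{cauchybound} gives
\[
\rho\leq 2\max_{0\leq i\leq d-2}\Bigl|\tfrac{c_i}{c_d}\Bigr|^{1/(d-i)}\leq 2\max_{2\leq s\leq d}\frac{\beta^{\,s/s}}{(s!)^{1/s}}\quad\text{where }s=d-i,
\]
wait—more carefully, with $s=d-i$ ranging over $2,\dots,d$ (note the $i=d-1$ term is excluded in Theorem~\ref{rahmannewtonbound}), the bound on the exponent-$1/s$ quantity is $\beta/(s!)^{1/s}$, which is maximized at $s=2$, giving $\beta/\sqrt{2}$; hence $\rho\leq 2\beta/\sqrt{2}=\sqrt{2}\,\beta=\sqrt{2}\bigl({n\choose 2}-m\bigr)$. (If $h$ is a monomial, i.e.\ $G$ is complete, then $\rho=0$ and the bound is trivially true with $\mathcal{U}=\{n-1\}$.) Finally, every disc in $\mathcal U$ has center of modulus at most $n-1$ (the shifted center $(n-1)-\beta$ has modulus at most $\max\{n-1,\beta-n+1\}$, but combined with the radius the farthest any point in $\mathcal U\cup\{0,\dots,k-1\}$ can be from the origin is $n-1+\sqrt{2}\,\beta$), so every chromatic root has modulus at most $n-1+\sqrt{2}\bigl({n\choose 2}-m\bigr)$.

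The main obstacle is making the last two routine-looking estimates airtight: first, verifying that the sequence $\beta^{\,s/(s-1)}$—or in the cleaner parametrization $\beta/(s!)^{1/s}$—really is decreasing in $s$ over the relevant range so that the maximum occurs at the smallest index (the same monotonicity issue handled in the proof of Theorem~\ref{thmforxlargeenough}), and second, checking carefully that the shifted center $n-1-{n\choose 2}+m$ together with its disc of radius $\sqrt 2\bigl({n\choose 2}-m\bigr)$ does not push the modulus bound past $n-1+\sqrt2\bigl({n\choose 2}-m\bigr)$; since $m\leq{n\choose 2}$, the shifted center lies in $[\,n-1-\beta,\ n-1\,]$ on the real axis, so its distance to $0$ plus the radius is at most $\max\{n-1,\beta-(n-1)\}+\sqrt2\,\beta\leq n-1+\sqrt2\,\beta$ once one observes $\beta-(n-1)\leq n-1+(\sqrt2-1)\beta$ trivially holds, and the undisplaced centers $k,\dots,n-2$ are all at most $n-1$ from the origin. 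Everything else is bookkeeping with the Newton-basis factorization.
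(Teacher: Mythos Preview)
Your argument is essentially identical to the paper's: factor $\pi(G,z)=(z)_{\downarrow k}\,f(z)$, write $f$ in the Newton basis with nodes $k,k+1,\dots,n-1$, apply Theorem~\ref{rahmannewtonbound} using $a_n=1$ and $a_{n-1}={n\choose 2}-m$ to obtain the stated centers, and then bound the Cauchy radius via Lemma~\ref{boundpartitions} and the monotonicity of $(r!)^{1/r}$ to get $\rho\le\sqrt{2}\bigl({n\choose 2}-m\bigr)$. The paper ends with ``the results follow'' for the modulus statement and does no disc-by-disc analysis, so your final paragraph goes beyond it; however, the inequality you invoke there, $\beta-(n-1)\leq n-1+(\sqrt2-1)\beta$, is not the one actually needed (you need $|n-1-\beta|\le n-1$, i.e.\ $\beta\le 2(n-1)$), and in fact neither your wording nor the paper's handles the case $\beta>2(n-1)$ --- this is a minor loose end in the paper's own statement rather than a defect peculiar to your proof.
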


\begin{proof}
First recall that $\displaystyle \pi(G,z)=\sum_{i=k}^na_i(G)\,(z)_{\downarrow i}$. Therefore $\pi(G,z)=(z)_{\downarrow k}\, f(z)$  and the roots of $\pi(G,z)$ are precisely $\{0,1,\dots , k-1\}$ union the roots of $f(z)$. Hence, it suffices to show that the roots of $f(z)$ lie in $\mathcal{U}$. Now,

\begin{eqnarray*}
f(z) & = & a_k \, \frac{(z)_{\downarrow k}}{(z)_{\downarrow k}} + a_{k+1} \, \frac{(z)_{\downarrow k+1}}{(z)_{\downarrow k}} + \cdots + a_n \, \frac{(z)_{\downarrow n}}{(z)_{\downarrow k}} \\
& =& a_{k}+a_{k+1}(z-k)+\dots +a_n(z-k)\cdots (z-n+1).
\end{eqnarray*}

Hence,
$$f(z)=\sum_{j=0}^{n-k}a_{k+j}P_j(z)$$
where  $P_j(z)$'s are Newton bases with respect to the nodes $k,k+1,\dots ,n-1$. Therefore, by Theorem~\ref{rahmannewtonbound}, $f(z)$ has all its roots in the union of the discs centered at 
$$k, \, k+1,\dots ,n-2, \, n-1-{n \choose 2}+m$$
each of radius $\rho$ where $\rho$ is the Cauchy bound of the polynomial
$$g=a_n z^{n-k}+a_{n-2}z^{n-k-2}+\cdots +a_{k+1}z+a_k.$$
Since $a_n=1$, by the inequality given in (\ref{cauchybound}) we obtain
$$\rho(g) \leq 2 \operatorname{max} \left\lbrace a_{n-r}(G) ^{1/r} \right\rbrace_{2\leq r\leq n-k}.$$
Also, by Lemma~\ref{boundpartitions}, we get
$$a_{n-r}(G) ^{1/r}\leq \left( \frac{\left( {n\choose 2} -m \right)^r}{r!} \right)^{1/r}= \frac{ {n\choose 2} -m }{(r!) ^{1/r}}.$$
Now, $(r!)^{1/r}$ increases as $r$ increases. Therefore, $a_{n-r}(G)\leq \frac{1}{\sqrt{2}}\left( {n\choose 2} -m \right)$ for $2\leq r \leq n-k$. Thus, $\rho(g)\leq \sqrt{2}\left( {n\choose 2} -m \right)$ and the results follow.
\end{proof}

\begin{corollary}Let $G$ be a graph of order $n$ and size $m$. If $z$ is a root of  $\pi(G,z)$ then

$$|\Im(z)|\leq \sqrt{2}\left( {n \choose 2} - m \right),$$
$$\Re(z)\leq n-1+\sqrt{2}\left( {n \choose 2} - m \right).$$
\end{corollary}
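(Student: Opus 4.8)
The plan is to deduce the corollary directly from Theorem~\ref{chromaticboundaysel}. That theorem tells us that every root $z$ of $\pi(G,z)$ lies either in the finite set $\{0,1,\dots,k-1\}$ or in the union $\mathcal{U}$ of the discs centred at the real numbers $k,\,k+1,\dots,n-2$ and $n-1-{n \choose 2}+m$, each of radius $\rho:=\sqrt{2}\bigl({n \choose 2}-m\bigr)$. Since we only want an upper bound on $|\Im(z)|$ and on $\Re(z)$, it suffices to verify both inequalities on each of these two pieces separately.

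First I would dispose of the integer roots: each of $0,1,\dots,k-1$ is real, so $|\Im(z)|=0\le\rho$ (using $m\le{n \choose 2}$), while $\Re(z)\le k-1\le n-1\le n-1+\rho$. Next, suppose $z$ lies in a disc centred at a real number $c$ with radius $\rho$, so $|z-c|\le\rho$. Because $c$ is real we have $|\Im(z)|=|\Im(z-c)|\le|z-c|\le\rho$, which is exactly the first claimed inequality; and $\Re(z)=c+\Re(z-c)\le c+|z-c|\le c+\rho$.

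It then remains only to bound the centres. Among $k,k+1,\dots,n-2$ the largest is $n-2\le n-1$, while the remaining centre satisfies $n-1-{n \choose 2}+m\le n-1$ since $m\le{n \choose 2}$ (with equality precisely when $G\cong K_n$). Hence every centre $c$ satisfies $c\le n-1$, and combining this with the preceding paragraph yields $\Re(z)\le n-1+\rho=n-1+\sqrt{2}\bigl({n \choose 2}-m\bigr)$, as required. There is essentially no obstacle here: the argument is pure bookkeeping on the disc centres produced by Theorem~\ref{chromaticboundaysel}. The one place that calls for (trivial) care is that the rightmost centre can be $n-1-{n \choose 2}+m$ rather than $n-2$; this occurs exactly in the complete-graph case, where $\rho=0$ and the statement merely records that the chromatic roots of $K_n$ are $0,1,\dots,n-1$.
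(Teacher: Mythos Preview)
Your argument is correct and is exactly the immediate deduction from Theorem~\ref{chromaticboundaysel} that the paper intends; indeed the paper states this corollary without proof, as it follows at once from the fact that all the disc centres are real and at most $n-1$.
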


\begin{table}
\begin{center}
\begin{tabular}{l||c|c|c|}

Graph $G$    &  Sokal bound & Fernandez-Procacci bound & New bound\\
& & & \\
\hline
\hline
& & & \\
$\overbar{G}$ is a tree      & $7.964(n-2)$   & $6.908(n-2)$ & $2.414(n-1)$  \\
& & & \\
$\overbar{G}$ is a cycle      & $7.964(n-3)$   & $6.908(n-3)$ & $2.414n-1$  \\
& & & \\
 $\overbar{G}$ is  a theta graph  & $7.964(n-3)$   & $6.908(n-3)$ & $2.414n+0.414$  \\ 
& & & \\
 $\overbar{G}$ is $3$-regular   & $7.964(n-4)$   & $6.908(n-4)$    & $3.121n-1$  \\
 & & & \\
  $\overbar{G}$ is $4$-regular   & $7.964(n-5)$   & $6.908(n-5)$    & $3.828n-1$  \\
\end{tabular}
\end{center}
\caption{Comparison of bounds for the chromatic roots of a graph $G$ of order $n$ and size $m$ whose complement $\overbar{G}$ is a cycle, tree, $3$-regular graph or theta graph.}
\label{boundtable}
\end{table}

Sokal \cite{sokal} proved the moduli of chromatic roots are bounded by $7.964 \Delta$, with an improvement in the constant to $6.908$ in \cite{fernandez}. 
Table~\ref{boundtable} compares our new bound on the moduli to these, for a variety of dense of graphs. Note the significant improvement in the constant in linear upper bounds. In particular, for any family of $r$-regular graphs with $r \geq n-8$, our bounds are asymptotically much better than the others.

\section{Concluding remarks}

In \cite{dongbook} it was shown that if $G$ is a connected graph of order $n$, then for every $x\in \mathbb{N}$,
$$\pi(G,x)~\leq ~x(x-1)^{n-1}$$
where equality holds for $x\geq 3$ if and only if $G$ is a tree. From this we can prove that to prove Conjecture~\ref{tomesdongconj}, it is sufficient to prove it for $2$-connected graphs.

\begin{lemma}\label{transitionlemma}Let $G$ be a graph in $\kchromfamily{k}$ consisting of $t$ blocks $B_1,\dots , B_t$  and $n_i$ be the order of $B_i$. Let also $x$ be a natural number. Suppose that for some block $B_i$ with $\chi(B_i)=k$, the inequality $\pi(B_i,x)\, \leq \, (x)_{\downarrow k}\,(x-1)^{n_i-k}$ holds. Then,
$$\pi(G,x)\, \leq \, (x)_{\downarrow k}\,(x-1)^{n-k}.$$
Moreover, for $x\geq k$ the equality $\pi(G,x)=(x)_{\downarrow k}\,(x-1)^{n-k}$ holds if and only if  $G$ has exactly one $k$-chromatic block, say $B_i$, and for this block the equality $\pi(B_i,x)\, = \, (x)_{\downarrow k}\,(x-1)^{n_i-k}$ holds, and all the rest of the blocks are $K_2$'s.
\end{lemma}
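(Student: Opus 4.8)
\textbf{Proof proposal for Lemma~\ref{transitionlemma}.}

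The plan is to use the multiplicativity of the chromatic polynomial over blocks together with the known bound $\pi(H,x)\leq x(x-1)^{n(H)-1}$ for connected graphs, applied to every block except the distinguished one. Recall that if $G$ has blocks $B_1,\dots,B_t$ of orders $n_1,\dots,n_t$, then $\sum_{j=1}^t (n_j-1) = n-1$, and the chromatic polynomial satisfies
$$\pi(G,x) = \frac{\prod_{j=1}^t \pi(B_j,x)}{x^{t-1}}.$$
First I would isolate the hypothesised block $B_i$ with $\chi(B_i)=k$ and $\pi(B_i,x)\leq (x)_{\downarrow k}(x-1)^{n_i-k}$, and bound each remaining block $B_j$ ($j\neq i$) by $\pi(B_j,x)\leq x(x-1)^{n_j-1}$, which holds for all $x\in\mathbb{N}$ since each $B_j$ is connected. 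Multiplying these together and dividing by $x^{t-1}$, the factors of $x$ telescope: the product contributes $x^{t-1}\cdot(x)_{\downarrow k}$ in the numerator (one $x$ from each of the $t-1$ blocks $B_j$, times the $(x)_{\downarrow k}$ from $B_i$), cancelling the $x^{t-1}$, and the powers of $(x-1)$ add up to $(n_i-k)+\sum_{j\neq i}(n_j-1) = (n_i - k) + (n-1) - (n_i-1) = n-k$. This yields $\pi(G,x)\leq (x)_{\downarrow k}(x-1)^{n-k}$, as desired. A small point to check is that the product bound is legitimate even when some $\pi(B_j,x)$ or $\pi(B_i,x)$ equals $0$ for small $x$; since all the quantities involved are nonnegative for $x\in\mathbb{N}$, the inequalities multiply without issue.

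For the equality characterisation, suppose $x\geq k$ and $\pi(G,x)=(x)_{\downarrow k}(x-1)^{n-k}$. Then every inequality used above must be tight. For each block $B_j$ with $j\neq i$ we need $\pi(B_j,x)=x(x-1)^{n_j-1}$; by the cited rigidity statement (equality holds for $x\geq 3$ iff $B_j$ is a tree), and since a block that is a tree must be a single edge $K_2$, every such $B_j$ is a $K_2$ — but one must first rule out $\chi(B_j)=k$ for these blocks. Here I would argue that if some $B_j$ with $j\neq i$ also had $\chi(B_j)=k$, then bounding \emph{that} block via the hypothesis-type bound $(x)_{\downarrow k}(x-1)^{n_j-k}$ instead of $x(x-1)^{n_j-1}$ gives a strictly smaller value when $x\geq k\geq 2$ (since $(x-1)^{k-1} < x^{k-1}\cdot\frac{1}{(x)_{\downarrow k}/x}$... more cleanly: $(x)_{\downarrow k}(x-1)^{n_j-k} = x(x-1)^{n_j-1}\cdot\frac{(x-1)_{\downarrow k-1}}{(x-1)^{k-1}} < x(x-1)^{n_j-1}$ for $x\geq k\geq 3$ because $(x-1)(x-2)\cdots(x-k+1) < (x-1)^{k-1}$), contradicting tightness unless we re-examine. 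The cleanest route: show that if $G$ had two $k$-chromatic blocks then $1$ would be a root of $\pi(G,x)/(x)_{\downarrow k}$ of multiplicity at least... actually, use the block-multiplicity fact cited earlier in the paper (the multiplicity of the root $1$ of $\pi(G,x)$ equals the number of blocks). If $\pi(G,x)=(x)_{\downarrow k}(x-1)^{n-k}$ then the multiplicity of $1$ is exactly $n-k$, so $G$ has exactly $n-k$ blocks; combined with $\sum(n_j-1)=n-1$ one deduces the block orders. Conversely, if $G$ has exactly one $k$-chromatic block $B_i$ with $\pi(B_i,x)=(x)_{\downarrow k}(x-1)^{n_i-k}$ and all others are $K_2$, the multiplicativity formula gives $\pi(G,x)=(x)_{\downarrow k}(x-1)^{n-k}$ directly.

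The main obstacle is the equality direction: the forward implication requires care in propagating tightness through the product, specifically ruling out a second $k$-chromatic block and handling the possibility that a non-distinguished block is a non-tree connected graph for which $\pi(B_j,x)=x(x-1)^{n_j-1}$ could fail to force $B_j\cong K_2$. I expect the block-count argument via the multiplicity of the chromatic root $1$ (already invoked in the excerpt) to be the decisive tool here, since it pins down the number of blocks to be exactly $n-k$ and then a counting argument on the orders $n_j$ forces all but one block to be $K_2$'s and the remaining one to carry all the "excess" chromatic number; one then checks that this remaining block must be the one attaining its bound with equality.
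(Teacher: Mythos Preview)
Your argument for the inequality is exactly the paper's: factor $\pi(G,x)$ over blocks, bound the distinguished block by $(x)_{\downarrow k}(x-1)^{n_i-k}$ and each other block by the connected-graph bound $x(x-1)^{n_j-1}$, and use $\sum_j(n_j-1)=n-1$ to collapse the exponents.

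For the equality case, however, you talk yourself out of the correct (and complete) argument and into an incorrect one. Your first observation is right and is precisely what the paper does: tightness of the product forces $\pi(B_j,x)=x(x-1)^{n_j-1}$ for every $j\neq i$, and for $x\geq k\geq 3$ this forces each such $B_j$ to be a tree, hence (being a block) $B_j\cong K_2$. At that point you are \emph{done}: a $K_2$ has chromatic number~$2$, so for $k\geq 3$ none of the $B_j$ with $j\neq i$ can be $k$-chromatic, and the ``exactly one $k$-chromatic block'' clause follows automatically. There is nothing further to rule out.

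The detour through the multiplicity of the root~$1$ is not only unnecessary but does not work as stated. The equality hypothesis in the lemma is pointwise, at a single natural number $x\geq k$; it is not an identity of polynomials. From $\pi(G,x_0)=(x_0)_{\downarrow k}(x_0-1)^{n-k}$ at one value $x_0$ you cannot read off the multiplicity of~$1$ as a root of $\pi(G,\cdot)$, so the block-count deduction via root multiplicities is unjustified. Drop that paragraph and keep your first line of reasoning; it already matches the paper's proof.
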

\begin{proof}
Clearly $n_1+n_2 + \cdots n_t = n+t-1$. Let $B_1$ be a block of $G$ such that $\chi(B_1)=k$ and $\pi(B_1,x)\, \leq \, (x)_{\downarrow k}\,(x-1)^{n_1-k}$. Since $B_i$ is a connected graph, $\pi(B_i,x)\leq x(x-1)^{n_i-k}$  for each $i\geq 2$, as noted earlier. Also, the {\em Complete Cutset Theorem} for chromatic polynomials (see, for example, \cite{dongbook})
states that if $G_1$ and $G_2$ be two graphs that overlap in a clique of size $r$, then $\displaystyle{\pi(G_1\cup G_2,x)=\frac{\pi(G_1,x)\, \pi(G_2,x)}{(x)_{\downarrow r}}}$. From this result, we derive that
\begin{eqnarray*}
\pi(G,x) & = & \pi(B_1,x)\, \frac{\pi(B_2,x)}{x}\dots \, \frac{\pi(B_t,x)}{x}\\
           & \leq & (x)_{\downarrow k}\,(x-1)^{n_1-k} (x-1)^{n_2-1} \cdots (x-1)^{n_t-1}\\
           & = & (x)_{\downarrow k}\,(x-1)^{n_1+n_2 + \cdots n_t-k-(t-1)}\\
           & = & (x)_{\downarrow k}\,(x-1)^{n+t-1-k-(t-1)}\\
           & = & (x)_{\downarrow k}\,(x-1)^{n-k}.
\end{eqnarray*}
Now, $\pi(G,x)\, = \, (x)_{\downarrow k}\,(x-1)^{n-k}$ if and only if $\pi(B_1,x)\, = \, (x)_{\downarrow k}\,(x-1)^{n_1-k}$ and $\pi(B_i,x)\, = \, x(x-1)^{n_i-1}$ for $i\geq 2.$ The latter equality holds if and only if $B_i$ is a tree. But since $B_i$ is a block this means that $B_i$ is equal to a $K_2$.
\end{proof}

In \cite{tomescu3chromblock}, the maximum number of $x$-colourings of a  $2$-connected $3$-chromatic graph of order $n$ was determined. For $k\geq 4$, from some computations on small graphs, we have noted that the following strengthening of Conjecture~\ref{tomesdongconj} might hold.

\begin{figure}
\begin{center}
\includegraphics[width=4in]{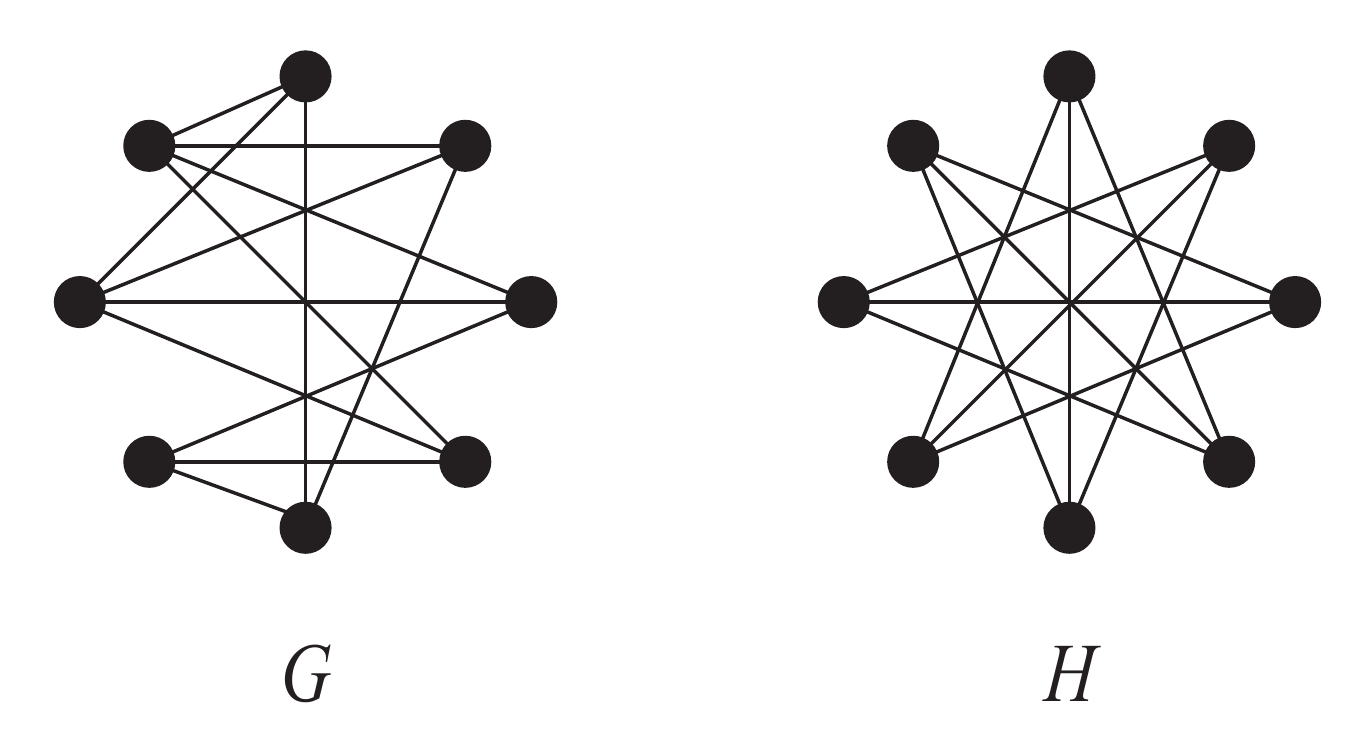}
\caption{}
\label{fig}
\end{center}
\end{figure}

\begin{conjecture}\label{newconjecture} Let $G$ be a $2$-connected $k$-chromatic graph of order $n>k\geq 4$. Then for all $x \geq k$,
\[ \pi(G,x)\leq \frac{ (x)_{\downarrow k}\pi(C_{n-k+2},x)}{x(x-1)},\]
with equality holding if $G$ arises by attaching an ear to $K_{k}$ (an {\em ear} is a new path or cycle that overlaps an existing graph only in its two endpoints).
\end{conjecture}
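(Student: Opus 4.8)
Write
\[ B(n,k,x):=\frac{(x)_{\downarrow k}\,\pi(C_{n-k+2},x)}{x(x-1)}=(x-1)_{\downarrow k-1}\left((x-1)^{n-k+1}+(-1)^{n-k}\right) \]
for the conjectured bound; note $B(k+1,k,x)=(x)_{\downarrow k}(x-2)$ and that $B(n,k,x)=\pi(H,x)$ when $H$ is $K_k$ with one edge subdivided $n-k$ times. The plan is to induct on $n$, splitting $G$ along its $2$-vertex cuts in the spirit of the block decomposition of Lemma~\ref{transitionlemma} but one level of connectivity higher. Two elementary facts about cycles, to which the recursion keeps reducing, I would establish first: \emph{(a)} among all $2$-connected graphs on $p$ vertices the cycle $C_p$ maximizes the chromatic polynomial, so $\pi(Q,x)\le\pi(C_p,x)$ for $x\ge 3$ (a standard fact; see \cite{dongbook}); and \emph{(b)} $\pi(C_a,x)\pi(C_b,x)/(x(x-1))\le\pi(C_{a+b-2},x)$ for $x\ge 2$, since the left-hand side is the chromatic polynomial of $C_{a+b-2}$ with one chord added and deletion--contraction of that chord removes the nonnegative quantity $\pi(C_{a-1},x)\pi(C_{b-1},x)/x$. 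The base case $n=k+1$ is a finite check: each $2$-connected $k$-chromatic graph on $k+1$ vertices is $K_{k+1}$ with a few edges deleted, and one verifies directly that $\pi(G,x)\le(x)_{\downarrow k}(x-2)=B(k+1,k,x)$.

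Now assume $n>k+1$. \emph{Case 1: $G$ has a cut-pair $\{u,v\}$ joined by an edge.} Then $G=G_1\cup G_2$ with $V(G_1)\cap V(G_2)=\{u,v\}$ and $uv$ present in both pieces; choosing the cut so that $G_1,G_2$ are themselves $2$-connected (possible via Tutte's decomposition of $G$ into its $3$-connected components, cycles and bonds), the Complete Cutset Theorem gives $\pi(G,x)=\pi(G_1,x)\pi(G_2,x)/(x(x-1))$. Since $\chi(G)=\max(\chi(G_1),\chi(G_2))=k\ge 4$, at least one piece, say $G_1$, is $k$-chromatic; with $n_1,n_2$ the orders we have $n_1+n_2=n+2$ and $3\le n_1,n_2\le n-1$. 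By induction $\pi(G_1,x)\le B(n_1,k,x)$, and by fact (a) $\pi(G_2,x)\le\pi(C_{n_2},x)$; multiplying and applying fact (b) with $a=n_1-k+2$ and $b=n_2$ yields $\pi(G,x)\le B(n,k,x)$. (In particular this handles deleting the last ear of an ear-decomposition of $G$ when its endpoints are adjacent.) \emph{Case 2: $G$ has a cut-pair, but none joined by an edge.} Writing $G=G_1\cup G_2$ with $V(G_1)\cap V(G_2)=\{u,v\}$ and $uv\notin E(G)$, one has the ``$2$-sum at a non-edge'' identity
\[ \pi(G,x)=\frac{\pi(G_1/uv,x)\,\pi(G_2/uv,x)}{x}+\frac{\pi(G_1+uv,x)\,\pi(G_2+uv,x)}{x(x-1)}, \]
in which each $G_i+uv$ is $2$-connected on $n_i$ vertices while each $G_i/uv$ is connected on $n_i-1$ vertices (possibly with a cut vertex). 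Bounding the $2$-connected pieces by the inductive hypothesis and the contracted pieces by the coarser connected-graph bounds (Theorem~\ref{tomesdisconn}, Lemma~\ref{transitionlemma}), the right-hand side should again collapse, after some bookkeeping of orders and chromatic numbers and a few further cycle inequalities of type (b), to at most $B(n,k,x)$; the loss of $2$-connectivity under contraction is the first genuine technical point to settle.

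\emph{Case 3 (the crux): $G$ is $3$-connected.} Here there is no cut-pair, and since the extremal graph is never $3$-connected we must prove the \emph{strict} inequality $\pi(G,x)<B(n,k,x)$. The natural attack is Tutte's theorem that a $3$-connected graph on at least five vertices has an edge $e$ with $G/e$ again $3$-connected, together with $\pi(G,x)=\pi(G-e,x)-\pi(G/e,x)$: then $G-e$ is $2$-connected on $n$ vertices with one fewer edge and $G/e$ is $3$-connected on $n-1$ vertices, suggesting a double induction on $(n,|E(G)|)$. The obstacles: deleting $e$ may drop the chromatic number to $k-1$ (for which $B$ is larger), forcing one either to find a removable edge with $\chi(G-e)=k$ --- impossible for $k$-critical graphs, which then need separate handling --- or to absorb the gap into $\pi(G/e,x)$; and $\pi(G/e,x)$ need not be nonnegative throughout $[k,\infty)$. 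An alternative is a purely quantitative bound exploiting $\delta(G)\ge 3$, but Theorem~\ref{maintheorem}, which only uses $\Delta$, is readily seen to be too weak, so one would need a sharpening of Lemma~\ref{boundpartitions} for graphs of minimum degree at least $3$ (or a transfer-matrix estimate for the near-cubic $3$-connected graphs that maximize $\pi$). I expect Case~3 to be the principal obstacle and to require an ingredient not present in the paper.

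Finally, two reductions are worth keeping in mind. For \emph{integer} $x\ge k$ it suffices to prove the stronger per-coefficient inequality $a_i(G)\le a_i(H)$ for all $i$ (since $(x)_{\downarrow i}\ge 0$ there), and because $a_i$ obeys the same deletion--contraction recursion the whole scheme can be run at the level of the $a_i$'s; the step from integer to real $x$ can then be made by a root-localization argument of the type used in the proof of Theorem~\ref{thmforxlargeenough}.
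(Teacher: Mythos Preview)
The statement you are attempting to prove is stated in the paper as an \emph{open conjecture} (Conjecture~\ref{newconjecture}), arrived at from computations on small graphs; the paper offers no proof, not even a sketch. There is therefore nothing in the paper to compare your proposal against.

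As to the proposal itself, you are candid that it is not a proof but an outline with a major unresolved case. The reductions in Cases~1 and~2 via $2$-vertex cuts and the Complete Cutset Theorem, together with the cycle inequalities (a) and (b), are reasonable and in the spirit of Lemma~\ref{transitionlemma}. However, you yourself identify the decisive gap: in Case~3, when $G$ is $3$-connected, the deletion--contraction scheme you suggest runs into two genuine obstructions --- the chromatic number can drop under edge deletion (and for $k$-critical $3$-connected graphs this is unavoidable for \emph{every} edge), and $\pi(G/e,x)$ carries no sign information that lets you throw it away. Neither difficulty is addressed, and the alternative you float (sharpening Lemma~\ref{boundpartitions} for $\delta\ge 3$) is itself an open-ended problem. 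Case~2 also has a loose end you note: contraction at a non-adjacent cut pair can destroy $2$-connectivity, so the inductive hypothesis no longer applies directly to $G_i/uv$, and the ``coarser connected-graph bounds'' you invoke are precisely the content of Conjecture~\ref{tomesdongconj}, which is also open.

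In short: the paper has no proof of this statement, and your proposal, while a sensible plan of attack, leaves the $3$-connected case (and part of Case~2) genuinely open. A complete proof along these lines would settle an open problem and would require new ideas beyond what either you or the paper provide.
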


What about for even higher connectivity? We have found that among all $3$-connected $3$-chromatic graphs of order $8$, the graph $G$ shown at the left of Figure~\ref{fig} is the unique $3$-connected $3$-chromatic graph of order $8$ with the largest number of $3$-colourings ($66$), but the graph $H$ on the right (which happens to be a circulant graph) has the most $4$-colourings, $2140$ (compared to $G$'s $2060$ $4$-colourings). Of course, for any positive integers $l$ and $k$, there is always an $l$-connected $k$-chromatic graph of order $n$ with the most $x$-colourings, provided $x$ is large enough, but our example shows that for some classes, we cannot start necessarily at $x = k$. 

In another direction, it is straightforward to see that if $a_{j}(H) \leq a_{j}(G)$ for all $j$, then $\pi(G,x) \leq \pi(H,x)$ for all $x \geq \chi(H)$. Thus if some graph in a subclass of $k$-chromatic graphs has the largest $a_{j}$ sequence (term-wise) among all such graphs, it necessarily has the largest number of $x$-colourings in the subclass. It seems reasonable that the extremal graphs in $\kchromfamilystar$ have the largest $\langle a_{i} \rangle$ sequence, and likewise for the graphs in Conjecture~\ref{newconjecture}. If we try to extend to $3$-connected graphs, there are not necessarily largest $\langle a_{i} \rangle$ sequences; as mentioned above, the graph $G$ shown at the left of Figure~\ref{fig} is the unique $3$-connected $3$-chromatic graph of order $8$ with the largest number of $3$-colourings among all $3$-connected $3$-chromatic graphs of order $8$, and its $a_{j}$ sequence, $\langle 11,74,124,71,15,1 \rangle$, is thus the only candidate for a largest such sequence, but the graph $H$ on the right has sequence $\langle 8,82,144,60,16,1 \rangle$, so no optimal sequence exists.

\vskip0.4in
\noindent {\bf \large Acknowledgments:} The authors would like to acknowledge the support of the Natural Sciences and Engineering Research Council of Canada. As well, the authors would like to thank Gordon Royle for providing us with files of small graphs with fixed chromatic numbers.

\bibliographystyle{elsarticle-num}

\end{document}